\numberwithin{equation}{section}
\newtheorem{theorem}{Theorem}[section]
\newtheorem{thm}{Theorem}[section]
\newtheorem{lemma}{Lemma}[section]
\newtheorem{alg}[thm]{Algorithm}
\newtheorem{remark}[thm]{Remark}
\newcommand{\f}{\frac}
\DeclareMathOperator{\argmin}{argmin}
\newcommand{\be}{\begin{equation}}
\newcommand{\ee}{\end{equation}}
\newcommand{\bea}{\begin{eqnarray}}
\newcommand{\eea}{\end{eqnarray}}
\newcommand{\beas}{\begin{eqnarray*}}
	\newcommand{\eeas}{\end{eqnarray*}}
\newcommand{\vertiii}[1]{{\left\vert\kern-0.25ex\left\vert\kern-0.25ex\left\vert #1
		\right\vert\kern-0.25ex\right\vert\kern-0.25ex\right\vert}}
\newcommand{\norm}[1]{\lVert#1\rVert}
\newcommand{\normiii}[1]{{\left\vert\kern-0.25ex\left\vert\kern-0.25ex\left\vert #1
		\right\vert\kern-0.25ex\right\vert\kern-0.25ex\right\vert}}
\begin{document}
\title{Accelerating and enabling convergence of nonlinear solvers for Navier-Stokes equations by continuous data assimilation}

\author{Xuejian Li\thanks{\small
		School of Mathematical and Statistical Sciences, Clemson University, Clemson, SC, 29364, xuejial@clemson.edu.}  \and Elizabeth V. Hawkins\thanks{\small
	School of Mathematical and Statistical Sciences, Clemson University, Clemson, SC, 29364, evhawki@clemson.edu.} \and	Leo G. Rebholz\thanks{\small
		School of Mathematical and Statistical Sciences, Clemson University, Clemson, SC, 29364, rebholz@clemson.edu.}
	\and		Duygu Vargun\thanks{\small
		School of Mathematical and Statistical Sciences, Clemson University, Clemson, SC, 29364, dvargun@clemson.edu.}	}

	\maketitle
	
	\begin{abstract}{
		This paper considers improving the Picard and Newton iterative solvers for the Navier-Stokes equations in the setting where data measurements or solution observations are available.  We construct adapted iterations that use continuous data assimilation (CDA) style nudging to incorporate the known solution data into the solvers.  For CDA-Picard, we prove the method has an improved convergence rate compared to usual Picard, and the rate improves as more measurement data is incorporated.  We also prove that CDA-Picard is contractive for larger Reynolds numbers than usual Picard, and the more measurement data that is incorporated the larger the Reynolds number can be with CDA-Picard still being contractive.  For CDA-Newton, we prove that the domain of convergence, with respect to both the initial guess and the Reynolds number, increases as the amount of measurement data is increased.
Additionally, for both methods we show that CDA can be implemented as direct enforcement of measurement data into the solution.  Numerical results for common benchmark Navier-Stokes tests illustrate the theory.
	}
\end{abstract}

	\section{Introduction}
	
	The Navier-Stokes equations (NSE) are widely used to model incompressible Newtonian fluid flow, which has applications across the spectrum of science and engineering.  We consider herein the steady state NSE, which are given on a domain $\Omega \subset \mathbb{R}^d, d=2,3$ by
	\begin{equation}\label{NS}
		\left\{\begin{aligned}
			-\nu \Delta u+u\cdot\nabla u+ \nabla p&={f} \quad \text{in}~\Omega,\\
			\nabla\cdot {u}&=0\quad \text{in}~\Omega,\\
			{u}&=0 \quad \text{on}~~\partial\Omega,
		\end{aligned}\right.
	\end{equation}
	where $u$ is the velocity of fluid, $p$ is the pressure, $\nu$ is the kinematic viscosity of the fluid,  and ${f}$ is an external forcing term. The parameter $Re:=\frac{1}{\nu}$ represents the Reynolds number.   While our study herein is restricted to nonlinear solvers for the steady system \eqref{NS} with homogenous Dirichlet boundary conditions, the results are extendable to solving the time dependent NSE at a fixed time step in a temporal discretization, as well as nonhomogeneous mixed Dirichlet/Neumann boundary conditions. 
	
	Perhaps the most common nonlinear iteration for solving \eqref{NS}  is  the Picard iteration, which is given by (suppressing the boundary conditions for the moment)
	\begin{align*}
		-\nu \Delta u_{k+1}+u_k\cdot\nabla u_{k+1}+ \nabla p_{k+1}&={f}, \\
		\nabla\cdot {u}_{k+1}&=0.
	\end{align*}
	The Picard iteration is known to be globally convergent under a smallness condition on the Reynolds number / problem parameters $\alpha:=M\nu^{-2} \|f \|_{H^{-1}}<1$, where $M$ is a domain-size dependent constant arising from Sobolev inequalities (it is defined precisely in section 2) \cite{GR86,PRX19}.  Provided $\alpha<1$, the constant $\alpha$ is also an upper bound on Picard's $H^1$ linear convergence rate \cite{GR86} and is believed reasonably sharp (although calculating $\alpha$ in practice is typically imprecise).  
	
	While the Newton iteration is also popular for solving \eqref{NS} due to its quadratic convergence, Newton requires a good initial guess and is often used together with the Picard iteration.  A common strategy is using Picard until sufficiently close to the solution and then switching to Newton.  The Newton iteration for the NSE is given by {\color{red} (\cite{J16} p. 339)}
	\begin{align*}
		-\nu \Delta u_{k+1}+u_k\cdot\nabla u_{k+1}+ u_{k+1}\cdot\nabla u_k + \nabla p_{k+1} &={f} + u_k \cdot\nabla u_k, \\
		\nabla\cdot {u}_{k+1}&=0.
	\end{align*}

	The purpose of this paper is to improve both the Picard and Newton iterations for the NSE in the setting where solution data is available, e.g. from measurements or observed data.  We construct a new algorithm which uses CDA style nudging to incorporate data into the solution via the following CDA-Picard iteration
	\begin{align*}
		-\nu \Delta u_{k+1}+u_k\cdot\nabla u_{k+1}+ \nabla p_{k+1} + \mu I_H(u_{k+1} - u)&={f}, \\
		\nabla\cdot {u}_{k+1}&=0,
	\end{align*}
	and CDA-Newton iteration
	\begin{align*}
		-\nu \Delta u_{k+1}+u_k\cdot\nabla u_{k+1}+ u_{k+1}\cdot\nabla u_k + \nabla p_{k+1} + \mu I_H(u_{k+1} - u)&={f} + u_k \cdot\nabla u_k, \\
		\nabla\cdot {u}_{k+1}&=0.
	\end{align*}
	Here, $\mu>0$ is a user defined nudging parameter and $I_H$ is an appropriate interpolant on a mesh with maximum element width $H$.  Note that the true solution $u$ remains unknown, but $I_H(u)$ is known from given or observed solution data. 
	
	The use of CDA above is based on the Azouani, Olson, and Titi algorithm from 2014 \cite{AOT14}, and we note
	its usual intent is for time dependent systems.  It has been used on a wide variety of problems such as NSE and turbulence \cite{AOT14,DMB20,FLT19,CGJP22}, Benard convection \cite{FJT15}, planetary geostrophic modeling \cite{FLT16}, the Cahn-Hilliard equation \cite{DR22}, and many others.  CDA has gained substantial interest in the last decade, which has led to many improvements and uses for it, including for parameter recovery \cite{CHL20,CH22}, sensitivity analyses \cite{CL21}, and numerical methods and analyses \cite{IMT20,LRZ19,RZ21,DR22,GNT18,JP23}.  To our knowledge, this paper is the first time CDA is used to improve efficiency and robustness of nonlinear solvers for steady PDEs.
	
	We prove that with a sufficient amount of solution data (e.g. from measurements or observables), CDA-Picard will improve the convergence rate compared to Picard: we show that in a weighted $H^1$ norm, the convergence rate of CDA-Picard is $\sim \sqrt{H}\alpha$; recall for usual Picard it is $\alpha$ in the $H^1$ norm \cite{GR86,Laytonbook}.  This result not only implies that the more solution data that is available the faster the convergence will be, but additionally proves the the CDA-Picard method will {\color{red} be globally convergent} provided $\sqrt{H} \alpha<O(1)$; this is made precise in our analysis.  We also prove that CDA helps Newton, and prove that CDA-Newton has a much larger domain of convergence with respect to both the initial guess and $\alpha$, and the domain of convergence increases further as more solutions data is available. 
	
	Additionally, another important result of the paper is that the methods used to incorporate the data are incredibly simple to implement.  Although our analysis holds for any $\mu>0$, we show from our analysis that $\mu=\infty$ is a good parameter choice (i.e. there are no ill-effects of having very large $\mu$), and {\color{red} leads to the decoupling of the mathematical condition/constraint} $I_H u_{k+1} = I_H u$.  This {\color{red} can in turn lead to} $u_{k+1}$ taking on the measurement values directly, and an algorithm where no nudging is explicitly used but instead data measurements are incorporated directly into the solution in the same way that Dirichlet boundary conditions are implemented.

In some sense, the results of our analysis are not particularly surprising from a high level - that is, it should be expected that nonlinear iterations can be improved if we know some of the solution data.  The novelty of this paper is that we show specifically {\it how} to incorporate the data efficiently and are able to prove precise mathematical results for the convergence rates, sizes of variables that will allow for convergence, and how much measurement data is needed to guarantee these results.

	The paper is organized as follows. In Section \ref{Pre}, we provide notation and mathematical preliminaries. 
	In Section \ref{CDA-Picard}, we analyze the convergence of the CDA-Picard iteration and use numerical tests to illustrate the theory. In Section \ref{CDA-Newton},  we focus on the convergence analysis and numerical tests for CDA-Newton. Finally, we draw conclusions in Section \ref{conclusion}.

	\section{Preliminaries}\label{Pre}
	
	We consider an open connected set $\Omega$ that is either a convex polygon or has smooth boundary.  We denote the natural function spaces for the NSE by 
	\begin{align}
		&Q:=\{v\in {L}^2(\Omega): \int_{\Omega}vdx=0\},\\
		& X:=
		\{v\in H^1\left(\Omega\right): v=0~~\text{on}~ \partial\Omega\},\\
		& V:=
		\{v\in X:  (\nabla \cdot v,q)=0\ \forall q\in Q\}.
	\end{align}
	We use $(\cdot,\cdot)$ to denote the $L^2$ inner product that induces the $L^2$ norm $\|\cdot\|$, $\langle \cdot,\cdot\rangle$ to denote the duality between $H^{-1}$ and $X$, and $\|\cdot\|_{-1}$ to denote the operator norm $\|\cdot\|_{\langle H^{-1},X\rangle}$. 
	
	We recall the Poincare inequality holds on $X$: there exists a constant $C_P$ dependent only on the domain satisfying $\| \phi \| \le C_P \|\nabla \phi \|$ for all $\phi\in X$.  This inequality implies the $H^1$ norm is equivalent to the $L^2$ gradient norm, and we will use the latter as the norm for $X$ and $V$.

\subsection{Steady NSE preliminaries}
	
	The weak form of the NSE (\ref{NS}) is to find $(u,p)\in X\times Q$ such that 
	\begin{equation}\label{weakNavier}
		\left\{\begin{aligned}
				a\left({u},v\right)+b\left({u},{u},v\right)+(p,\nabla\cdot v)&=\left\langle{f},v\right\rangle~~\forall v\in X,\\
			(\nabla\cdot{u},q)&=0~~\forall q\in Q,
		\end{aligned}\right.
	\end{equation}
	where $a(\cdot,\cdot)$ and $b(\cdot,\cdot,\cdot)$ are defined as 
	\begin{align*}
		a(u,v)&=\left(\nu\nabla u, \nabla v\right)~~\forall u,v\in X\\b(u,w,v)&=\frac12 \left((u\cdot\nabla)w, v\right) - \frac12 \left((u\cdot\nabla)v, w\right) ~~\forall u,w,v\in X.
	\end{align*}
Since the pair $(X,Q)$ satisfies the inf-sup condition, we can instead consider the equivalent system \cite{GR86}: Find $u\in V$ satisfying
	\begin{equation}\label{wd}
		a\left({u},v\right)+b\left({u},{u},v\right)=\left\langle{f},v\right\rangle~~\forall v\in V.
	\end{equation}

While we use the skew-symmetric form of the nonlinear term, we note the same analysis and results hold below with other energy preserving formulations of the nonlinear term including rotational form and EMAC \cite{CHOR17,OR20}.	The key point is that $b(u,v,v)=0$ for all $u,v\in X$.
The following inequalities hold for $b$ \cite{Laytonbook,temam}: there exists a constant $M$ dependent only on the domain $\Omega$ such that
	\begin{align}
		b(u,w,v)&\leq M\|u\|^{\frac{1}{2}}\|\nabla u\|^{\frac{1}{2}}\|\nabla w\|\|\nabla v\|,\\
b(u,w,v)&\leq M \|\nabla u\|\|\nabla w\|\|\nabla v\|.
	\end{align}

	We recall the classical  well-posedness result for equation (\ref{wd}) \cite{Laytonbook,temam}:
	
	\begin{lemma}
		Let $\alpha=M\nu^{-2}\|f\|_{-1}$. For any $f\in H^{-1}$ and $\nu$,	there exists at least one solution for NSE (\ref{wd}), the solution has a priori estimate 
		\begin{align}\label{Pri}
			\|\nabla u\|\leq \nu^{-1}\|f\|_{-1}.
		\end{align}
		Furthermore, if $\alpha<1$, 
		the solution is unique.
	\end{lemma}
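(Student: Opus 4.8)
The plan is to establish the three assertions in sequence, with the a priori bound \eqref{Pri} serving as the key tool for both existence and uniqueness. The estimate itself is immediate: testing \eqref{wd} with $v=u$ and using the skew-symmetry property $b(u,u,u)=0$ collapses the nonlinear term, leaving $\nu\|\nabla u\|^2 = \langle f,u\rangle$. Bounding the right-hand side by $\|f\|_{-1}\|\nabla u\|$ and dividing through gives $\|\nabla u\| \le \nu^{-1}\|f\|_{-1}$ at once, and the identical computation applies verbatim to any finite-dimensional (Galerkin) approximation.

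For existence I would use the standard Galerkin-plus-compactness argument. Since $V$ is a separable Hilbert space, fix a basis and let $V_m$ denote the span of the first $m$ elements; the finite-dimensional problem is to find $u_m\in V_m$ with $a(u_m,v) + b(u_m,u_m,v) = \langle f,v\rangle$ for all $v\in V_m$. Existence of $u_m$ follows from Brouwer's fixed-point theorem: defining the continuous map on $V_m$ whose zeros are the Galerkin solutions, the a priori computation shows that this map points inward on the sphere of radius $\nu^{-1}\|f\|_{-1}$, so a zero exists inside that ball. The resulting uniform bound $\|\nabla u_m\|\le\nu^{-1}\|f\|_{-1}$ then lets me extract a subsequence converging weakly in $V$ and, by the compact embedding $H^1(\Omega)\hookrightarrow L^2(\Omega)$, strongly in $L^2(\Omega)$.

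The main obstacle is passing to the limit in the nonlinear term $b(u_m,u_m,v)$, which is not weakly continuous on its own. The strong $L^2$ convergence supplied by Rellich's theorem is exactly what resolves this: writing $b$ in its explicit integral form, one factor can be controlled where convergence is strong while the remaining gradient factor converges only weakly, and the product converges to $b(u,u,v)$ for each fixed $v$ in the dense union $\cup_m V_m$. A density argument extends the identity to all $v\in V$, so the weak limit $u$ solves \eqref{wd}, and lower semicontinuity of the norm preserves \eqref{Pri} in the limit.

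Finally, uniqueness under $\alpha<1$ is a direct energy estimate. Given two solutions $u_1,u_2$, set $e=u_1-u_2$ and subtract the two weak formulations; rewriting the difference of the nonlinear terms as $b(u_1,e,v)+b(e,u_2,v)$ and then testing with $v=e$ annihilates $b(u_1,e,e)$ by skew-symmetry, leaving $\nu\|\nabla e\|^2 = -b(e,u_2,e)$. Bounding the right-hand side by $M\|\nabla e\|^2\|\nabla u_2\|$ and inserting the a priori estimate $\|\nabla u_2\|\le\nu^{-1}\|f\|_{-1}$ yields $\nu\|\nabla e\|^2 \le \nu\alpha\|\nabla e\|^2$, i.e. $(1-\alpha)\|\nabla e\|^2\le 0$. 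Since $\alpha<1$, this forces $\nabla e=0$, hence $e=0$.
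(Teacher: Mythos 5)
Your proof is correct and is the standard classical argument. Note that the paper itself offers no proof of this lemma --- it is stated as a recalled classical result with citations to Girault--Raviart and Temam --- and your Galerkin/Brouwer existence argument, energy-test a priori bound exploiting $b(u,v,v)=0$, and subtract-and-test uniqueness estimate under $\alpha<1$ are precisely the arguments found in those references, so there is nothing in the paper to diverge from.
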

	The restriction $\alpha< 1$ is usually referred to as the small data condition for steady NSE, but we refer to it herein as a smallness condition since we use data to mean observations or measurements (and not given PDE parameters).

\subsection{Picard and Newton iterations}

The Picard and Newton iterations for the steady NSE can be written in their $V$-formulations as follows: Find $u_{k+1}\in V$ satisfying for all $v\in V$ that
		\begin{align}
				\label{PicardO}a\left({u}_{k+1},v\right)+b\left({u}_{k},{u}_{k+1},v\right)&=\left\langle{f},v\right\rangle~~ \text{(Picard)},\\
				\label{NewtonO}a\left({u}_{k+1},v\right)+b\left({u}_{k},{u}_{k+1},v\right)+b\left({u}_{k+1},{u}_{k},v\right)& =\left\langle{f},v\right\rangle +b\left({u}_{k},{u}_{k},v\right)~~ \text{(Newton)}.
		\end{align}
	
	To help provide insight into the CDA-Picard and CDA-Newton methods studied below, we now recall some results for Picard (\ref{PicardO}) and Newton (\ref{NewtonO}).  First we give the Picard convergence result.  This is known from e.g. \cite{GR86}, and we include a proof in the Appendix for completeness.

		\begin{lemma}\label{PLemma}
		The Picard method (\ref{PicardO}) is unconditionally stable. Furthermore, if $\alpha<1$
		then the sequence $\{u_k\}$ generated by Picard converges to the NSE solution $u$ as $k\to \infty$ with an $\alpha$-linear rate.
	\end{lemma}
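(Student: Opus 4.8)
The plan is to handle the two assertions separately, both by energy arguments in the $V$-formulation. For \textbf{unconditional stability}, I would test the Picard equation \eqref{PicardO} with $v = u_{k+1}$. The skew-symmetry property $b(u_k,u_{k+1},u_{k+1})=0$ annihilates the nonlinear term, leaving $\nu\|\nabla u_{k+1}\|^2 = \langle f, u_{k+1}\rangle$. Bounding the right-hand side by $\|f\|_{-1}\|\nabla u_{k+1}\|$ and dividing gives $\|\nabla u_{k+1}\| \le \nu^{-1}\|f\|_{-1}$ for every $k$, with no restriction on $\alpha$ and no restriction on the initial guess $u_0$. This is precisely the stability claim, and it reproduces the a priori bound \eqref{Pri} at each iterate.

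For \textbf{convergence}, I would introduce the error $e_k := u - u_k$ and subtract the iteration \eqref{PicardO} from the weak form \eqref{wd} tested against the same $v\in V$. The linear part collapses to $a(e_{k+1},v)$, and the nonlinear difference $b(u,u,v)-b(u_k,u_{k+1},v)$ I would split by adding and subtracting the intermediate term $b(u_k,u,v)$, yielding $b(e_k,u,v)+b(u_k,e_{k+1},v)$. This produces the error equation $a(e_{k+1},v)+b(u_k,e_{k+1},v) = -\,b(e_k,u,v)$ for all $v\in V$.

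To extract a contraction I would set $v=e_{k+1}$. Skew-symmetry again eliminates $b(u_k,e_{k+1},e_{k+1})$, so $\nu\|\nabla e_{k+1}\|^2 = -\,b(e_k,u,e_{k+1})$. Estimating the trilinear term by the second $b$-bound, $b(e_k,u,e_{k+1}) \le M\|\nabla e_k\|\,\|\nabla u\|\,\|\nabla e_{k+1}\|$, and inserting the a priori bound $\|\nabla u\|\le \nu^{-1}\|f\|_{-1}$, I would divide through by $\nu\|\nabla e_{k+1}\|$ (the case $\|\nabla e_{k+1}\|=0$ being trivial) to obtain $\|\nabla e_{k+1}\| \le M\nu^{-2}\|f\|_{-1}\,\|\nabla e_k\| = \alpha\|\nabla e_k\|$. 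Since $\alpha<1$, iterating gives $\|\nabla e_k\| \le \alpha^k\|\nabla e_0\|\to 0$, which is exactly $\alpha$-linear convergence to the NSE solution $u$.

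The only genuinely delicate step is the algebraic splitting of the nonlinear difference, since the iteration couples the old iterate $u_k$ with the new one $u_{k+1}$. The key is to split $b(u,u,v)-b(u_k,u_{k+1},v)$ so that exactly one surviving trilinear term carries $e_{k+1}$ in its (vanishing) second slot while the other isolates $e_k$ against the bounded true solution $u$; choosing the intermediate term $b(u_k,u,v)$ rather than $b(u,u_{k+1},v)$ is what makes the $v=e_{k+1}$ test collapse to a clean one-step contraction. Everything after that decomposition is a routine application of the $b$-estimate, the a priori bound, and the definition of $\alpha$.
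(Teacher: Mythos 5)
Your proposal is correct and follows essentially the same route as the paper's Appendix proof: testing with $u_{k+1}$ and using skew-symmetry for the stability bound, then splitting the nonlinear difference via the intermediate term $b(u_k,u,v)$ to get the error equation $a(e_{k+1},v)+b(e_k,u,v)+b(u_k,e_{k+1},v)=0$, testing with $e_{k+1}$, and invoking the trilinear bound together with the a priori estimate to obtain the $\alpha$-contraction. No gaps; this matches the paper's argument step for step.
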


Next, we give a result from Newton, similar to what is in \cite{GR86} or classical literature.  For completeness, we provide a proof in the Appendix.

	\begin{lemma}\label{nc}
		Let $\beta=\nu(1-2\alpha)$. Assume $8\alpha\leq 1$ and let $u$ denote the NSE solution.  If the initial guess satisfies		
		$
		\frac{M}{\beta}\|\nabla (u - u_0)\|<1,
		$
		then the Newton method (\ref{NewtonO}) converges to $u$   (\ref{wd}) quadratically:
		\begin{align}
			\|\nabla (u - u_{k+1})\|\leq \frac{M}{\beta}\|\nabla (u - u_k)\|^2.
		\end{align}
	\end{lemma}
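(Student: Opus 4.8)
The plan is to derive an error equation for Newton and estimate the nonlinear error term directly, which is the standard route for quadratic convergence. Let me sketch the approach.

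**Setting up the error equation.** First I would write down the weak form \eqref{wd} satisfied by the true solution $u$ and subtract the Newton iteration \eqref{NewtonO}. Setting $e_{k+1} := u - u_{k+1}$, the true NSE gives $a(u,v)+b(u,u,v)=\langle f,v\rangle$. Subtracting the Newton step and using bilinearity of $a$ and trilinearity of $b$, the linear-in-$u_{k+1}$ terms should combine to produce $a(e_{k+1},v)+b(u_k,e_{k+1},v)+b(e_{k+1},u_k,v)$ on the left, while the right-hand side collapses to a purely quadratic expression in $e_k=u-u_k$. The key algebraic identity to verify is that
\begin{align*}
b(u,u,v)-b(u_k,u_k,v)-b(u_k,u_{k+1},v)-b(u_{k+1},u_k,v)+b(u_k,u_k,v)
\end{align*}
reduces, after writing $u_{k+1}=u-e_{k+1}$ and $u_k = u - e_k$, to something like $-b(e_k,e_k,v)$ plus the linear terms I want to move left. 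So I expect the error equation to read
\begin{align*}
a(e_{k+1},v)+b(u_k,e_{k+1},v)+b(e_{k+1},u_k,v)=b(e_k,e_k,v)\quad\forall v\in V.
\end{align*}
This quadratic right-hand side is the entire reason Newton converges quadratically.

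**Coercivity of the linearized operator and the bound.** Next I would test with $v=e_{k+1}$. The term $b(u_k,e_{k+1},e_{k+1})=0$ by skew-symmetry, so the left side becomes $\nu\|\nabla e_{k+1}\|^2 + b(e_{k+1},u_k,e_{k+1})$. To control the latter I would write $u_k = u - e_k$ and use the a priori bound $\|\nabla u\|\le\nu^{-1}\|f\|_{-1}$ together with the bound $b(w,z,v)\le M\|\nabla w\|\|\nabla z\|\|\nabla v\|$; the piece involving $u$ contributes $-M\nu^{-1}\|f\|_{-1}\|\nabla e_{k+1}\|^2 = -\alpha\nu\|\nabla e_{k+1}\|^2$, while the piece involving $e_k$ is higher order. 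Handling that higher-order $b(e_{k+1},e_k,e_{k+1})$ term cleanly is where the hypothesis $\frac{M}{\beta}\|\nabla(u-u_k)\|<1$ and the definition $\beta=\nu(1-2\alpha)$ enter, and I suspect one must argue inductively that $\|\nabla e_k\|$ stays small so this term can be absorbed. After bounding the right side by $M\|\nabla e_k\|^2\|\nabla e_{k+1}\|$ and dividing through by $\|\nabla e_{k+1}\|$, I should obtain $\beta\|\nabla e_{k+1}\|\le M\|\nabla e_k\|^2$, i.e. the claimed estimate.

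**The main obstacle.** I expect the genuinely delicate part to be controlling the linearized operator's coercivity uniformly across iterations, since $u_k$ (not $u$) appears in the trilinear terms. The clean constant $\beta=\nu(1-2\alpha)$ with the factor $2$ strongly suggests the argument splits $u_k=u-e_k$ and absorbs \emph{one} factor of $\alpha$ from the true solution and a second from bounding $\|\nabla e_k\|$ via the a priori estimate or the contraction hypothesis. Establishing that $\|\nabla e_k\|$ remains bounded so that both the coercivity constant stays positive and the induction closes is the crux; this is presumably exactly why the hypotheses $8\alpha\le 1$ and $\frac{M}{\beta}\|\nabla(u-u_0)\|<1$ are imposed, with the former guaranteeing $\beta>0$ and room to spare, and the latter seeding an induction showing $\frac{M}{\beta}\|\nabla(u-u_k)\|<1$ for all $k$ so that quadratic convergence is self-sustaining. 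I would therefore structure the proof as an induction on $k$, proving at each step both the quadratic bound and the preservation of the smallness condition on $\|\nabla e_k\|$.
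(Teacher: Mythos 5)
Your error equation and the overall skeleton (test with $v=e_{k+1}$, kill $b(u_k,e_{k+1},e_{k+1})$ by skew-symmetry, bound the quadratic right-hand side by $M\|\nabla e_k\|^2\|\nabla e_{k+1}\|$) are exactly right and match the paper. The gap is in how you propose to control the remaining linearized term $b(e_{k+1},u_k,e_{k+1})$. You split $u_k=u-e_k$, charge $\alpha\nu\|\nabla e_{k+1}\|^2$ to the piece with $u$, and plan to absorb $b(e_{k+1},e_k,e_{k+1})\le M\|\nabla e_k\|\,\|\nabla e_{k+1}\|^2$ into the coercivity using an induction that preserves $\frac{M}{\beta}\|\nabla e_k\|<1$. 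This does not close with the stated constants: the hypothesis only forces $\|\nabla e_k\|<\beta/M=\nu(1-2\alpha)/M$, which for $\alpha\le 1/8$ is about $\tfrac{3\nu}{4M}$, so your coercivity constant degrades to $\nu(1-\alpha)-M\|\nabla e_k\|\ge\nu\alpha$ rather than $\beta$. The resulting recursion $\|\nabla e_{k+1}\|\le\frac{M}{\nu\alpha}\|\nabla e_k\|^2$ combined with $\|\nabla e_k\|<\beta/M$ gives only $\|\nabla e_{k+1}\|\le\frac{1-2\alpha}{\alpha}\|\nabla e_k\|$, an amplification factor of at least $6$ when $\alpha\le 1/8$, so the induction you describe fails to show the error even decreases, let alone reproduce the constant $M/\beta$.

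The missing ingredient is a separate a priori stability bound on the Newton iterates themselves: under $8\alpha\le 1$ one proves $\|\nabla u_k\|\le 2\nu^{-1}\|f\|_{-1}$ for all $k$ (this is the paper's Lemma \ref{Nstable}, itself a small induction obtained by testing \eqref{NewtonO} with $v=u_{k+1}$ and optimizing the constants, which is precisely where the factor $8$ comes from). With that bound in hand you estimate $b(e_{k+1},u_k,e_{k+1})\le M\|\nabla u_k\|\,\|\nabla e_{k+1}\|^2\le 2\alpha\nu\|\nabla e_{k+1}\|^2$ directly, without splitting $u_k$, and the coercivity constant $\beta=\nu(1-2\alpha)$ drops out immediately; no absorption of $e_k$-dependent terms is needed and the quadratic bound holds at every step without any auxiliary smallness induction on $\|\nabla e_k\|$ beyond the stated hypothesis on $e_0$. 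Your intuition that the factor $2$ in $\beta$ reflects ``two doses of $\alpha$'' is close, but both doses come from the uniform bound $\|\nabla u_k\|\le 2\nu^{-1}\|f\|_{-1}$ on the iterates, not from splitting off the true solution $u$.
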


\subsection{Interpolation and CDA implementation}

We now define the CDA iterative solvers.  We begin with the interpolation operator.  Denote by $\tau_H(\Omega)$ a coarse mesh of $\Omega$ {\color{red} used to represent an interpolant of the true solution using measurements or observables.
Assumed} properties of $I_H$ are that it satisfies the usual CDA estimates: there exists a constant $C_I$ independent of $H$ satisfying
		\begin{align}
			\label{interpolationi}	\|I_Hv-v\|\leq C_IH \| \nabla v\|~~~~\forall v\in X,\\
			\label{interpolationi2}	\|I_Hv\|\leq C_I \|v\|~~~~\forall v\in X. 
		\end{align}
{\color{red}Some examples of such interpolation operators include Bernardi-Girault \cite{BG98}, Scott–Zhang \cite{SZ90}, and the $L^2$ projection onto piecewise constants \cite{fem:book:ern:guermond}.  
}

Although our analysis is general to $(X,Q)$, in practice one uses finite dimensional (finite element) subspaces $(X_h,Q_h)$ defined on a fine mesh $\tau_h$.  For the implementation details discussed here and performed in numerical tests to follow, we require that nodes of $\tau_H$ also be nodes of $\tau_h$, and $h \ll H$.

The CDA-Picard and CDA-Newton iterations are given in their $V$-formulations by
\begin{align}
			\label{Picard}	&a\left({u}_{k+1},v\right)+b\left({u}_{k},{u}_{k+1},v\right)+\mu (I_Hu_{k+1}-I_H  u,I_H v)=\left\langle{f},v\right\rangle~~\text{(CDA-Picard)}\\
			\label{Newton}	 	&a\left({u}_{k+1},v\right)+b\left({u}_{k},{u}_{k+1},v\right)+b\left({u}_{k+1},{u}_{k},v\right)+\mu (I_Hu_{k+1}-I_H u,I_H v)\nonumber\\& \ \ \ \ \ =\left\langle{f},v\right\rangle+b\left({u}_{k},{u}_{k},v\right)~~~~ \text{(CDA-Newton)}.
		\end{align}
Here, $\mu I_H(u_{k+1}-u)$ is a CDA nudging driving the state $u_{k+1}$ towards to the observations, and $\mu$ is a positive relaxation parameter that emphasizes the observations' accuracy.  We note that we use a type of variational crime on the nudging term, by using $I_H$ in the second argument of the nudging terms.  While this is only consistent with the PDE if $I_H$ is an $L^2$ projection, the use of $I_H$ additionally on the test function is key to allowing for less restrictions on parameters and no upper bound on $\mu$.  This nudging term / variational crime was first used in \cite{RZ21} to construct algebraic nudging, and was shown in \cite{GN20} to help reduce restrictions in CDA.

As will be revealed in the next sections, the convergence results we find for CDA-Picard and CDA-Newton require only that $\mu$ be sufficiently large, i.e. $\mu>O(H^{-2})$.  That is, there is no negative consequence of taking $\mu$ even larger, even to $\mu=\infty$.  While taking $\mu=\infty$ in practice could lead to numerical roundoff error, when put into the algorithms (before implementation) {\color{red} it decouples the constraint that $I_H u_{k+1} = I_H u$.  Provided the measurement locations are also nodes on the fine mesh, this can be implemented just like a Dirichlet boundary condition and without any explicit nudging at all, since we assume no noise in the measurement data}.  Hence CDA-Picard with $\mu=\infty$ {\color{red} takes the form} simply of usual Picard \eqref{PicardO} but with additional constraints $(u^h_{k+1})_j = u(x_j)$ for each of the $j=1,2,...$ measurement nodes (and analogous for direct implementation in CDA-Newton).  In the case where data measurements have noise, this approach and CDA in general can still be used until the residual falls to the same order of magnitude as the noise.

\subsection{Anderson acceleration}

In our numerical tests in section 3, we will apply Anderson acceleration (AA) to CDA-Picard.  We define AA here, following \cite{PR23}.  We assume a fixed point function $g:X\rightarrow X$, and $y_k:= g(x_{k-1}) - x_{k-1}$ is the $k^{th}$ step residual.  For our iterations, $g$ can be considered the solution operator of a single iteration of (CDA-)Picard.

\begin{alg}[Anderson acceleration]\label{alg:aa}
\ \\
\begin{algorithmic}
\State Choose initial iterate $x_0$, and algorithmic depth parameter $m$
\State Compute $w_1$, 
set relaxation parameter $\beta_0$, and update $x_1 = x_0 + \beta_0 w_1$
\Comment{$k=0$}
\end{algorithmic}
\begin{algorithmic}[1]
\For{$k = 1, \ldots$}
\Comment{$k > 0$}
\State Compute $y_{k+1}$
\State Set $m_k = \min\{k,m\}$
\State 
Set {\color{black}$F_k= \begin{pmatrix}(y_{k+1}-y_k) & \hdots & 
                         (w_{k+1-(m_k-1)} - w_{k-(m_k-1)})\end{pmatrix}$}\\
\hspace{.4in} and  {\color{black}$E_k= \begin{pmatrix}(e_{k}) & \ldots &
                         (e_{k-(m_k-1)})\end{pmatrix}$}
\State
Find $\gamma_k = \argmin  \|F_k \gamma - y_{k+1}\|_X$
\State Set relaxation parameter $\beta_k$
\State
Update $x_{k+1} = x_k + \beta_k y_{k+1} - \left(E_k + \beta_k F_k \right)\gamma_{k+1}$
\EndFor
\end{algorithmic}
\end{alg}

We interpret the minimization as a weighted
least-squares problem, which can be efficiently solved using for instance a QR 
factorization as in \cite{WaNi11}. 

Define the optimization gain $\theta_{k+1}$ by
\begin{align}\label{eqn:gaindef}
\theta_{k+1} \coloneqq \f{\norm{F_k \gamma_k - y_{k+1}}_X}{\norm{y_{k+1}}_X}.
\end{align}
As shown in \cite{EPRX20,PR21,PRX19}, at each iteration $k$, AA improves the linear convergence rate by a factor of $\theta_k$, but at the cost of adding additional higher-order terms.

\section{The CDA-Picard iteration}\label{CDA-Picard}

In this section we analyze the CDA-Picard iteration, which is given in weak form by: Given NSE solution data $I_H u$ and $u_k\in V$, find $u_{k+1}\in V$ satisfying
\begin{equation}\label{CDAPicard}
	a\left({u_{k+1}},v\right)+b\left({u_k},{u_{k+1}},v\right) + \mu(I_H(u_{k+1} - u),I_H v)=\left\langle{f},v\right\rangle~~\forall v\in V.
\end{equation}

We give below analysis and numerical tests that show how CDA can accelerate and even enable convergence in the Picard iteration.

\subsection{Analysis of CDA-Picard}

We now prove a convergence result for CDA-Picard.  The convergence is in the weighted $H^1$ norm
	\begin{align}\label{d1}
		\|v\|_{*}=\left(\|\nabla v\|^2+\frac{1}{2C_I^2H^2}\| v\|^2\right)^{\frac{1}{2}}.
\end{align}
While the convergence analysis for the usual Picard method is in the $H^1$ norm, for CDA-Picard the analysis is done in this {\color{red} weighted $H^1$ norm, as this appears to be the natural norm from the analysis.  If $H$ is very small, the $*$-norm becomes close to the $L^2$ norm.  Still, our results show} that CDA both accelerates and enables convergence, even though the comparison of the analysis is not quite apples to apples.  Of course once spatially discretized, all norms are equivalent and so convergence in any norm means convergence in all norms.

\vspace{1mm}
\begin{theorem}\label{Pcac}
Let $u$ be a steady NSE solution and suppose $I_H(u)$ is known.  If $\sqrt{2}C_IH\alpha^2<1$ and $\mu\geq \frac{\nu}{4C_I^2H^2}$, then for any initial guess $u_0$ the CDA-Picard iteration \eqref{CDAPicard} converges to $u$ linearly with rate (at least) $\sqrt{{\color{red}\sqrt{2}}C_IH}\alpha$:
	\begin{align}\label{CDAPC}
		\|u - u_{k+1}\|_{*}&\leq \sqrt{{\color{red}\sqrt{2}}C_IH}\alpha\|u - u_{k}\|_{*}.
	\end{align}
\end{theorem}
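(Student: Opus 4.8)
The plan is to derive an error equation, test it against the current error, and convert the resulting energy balance into a contraction in the weighted $*$-norm. First I would record that each iterate is well defined: for fixed $u_k$ the form $w\mapsto a(w,v)+b(u_k,w,v)+\mu(I_H w,I_H v)$ is coercive on $V$, since $b(u_k,v,v)=0$ gives $a(v,v)+\mu(I_H v,I_H v)=\nu\|\nabla v\|^2+\mu\|I_H v\|^2\ge\nu\|\nabla v\|^2$, so Lax--Milgram yields a unique $u_{k+1}\in V$. Writing $e_k:=u-u_k$ and subtracting \eqref{CDAPicard} from the weak NSE \eqref{wd} (the nudging term drops for the exact solution because $I_H(u-u)=0$), I would split the convective difference as $b(u,u,v)-b(u_k,u_{k+1},v)=b(e_k,u,v)+b(u_k,e_{k+1},v)$, obtaining the error equation
\begin{equation*}
a(e_{k+1},v)+b(e_k,u,v)+b(u_k,e_{k+1},v)+\mu(I_H e_{k+1},I_H v)=0\qquad\forall v\in V.
\end{equation*}

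Next I would choose $v=e_{k+1}$. The term $b(u_k,e_{k+1},e_{k+1})$ vanishes by skew-symmetry, and the nudging term becomes $\mu\|I_H e_{k+1}\|^2$, leaving the energy balance
\begin{equation*}
\nu\|\nabla e_{k+1}\|^2+\mu\|I_H e_{k+1}\|^2=-b(e_k,u,e_{k+1}).
\end{equation*}
The left side is the quantity I must bound below by $\|e_{k+1}\|_{*}^2$, while the right side is what must be absorbed and rendered small by the factor $\sim\sqrt H\,\alpha$.

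For the right side I would use the sharp trilinear bound $|b(e_k,u,e_{k+1})|\le M\|e_k\|^{1/2}\|\nabla e_k\|^{1/2}\|\nabla u\|\,\|\nabla e_{k+1}\|$ together with the a priori estimate \eqref{Pri}, which gives $M\|\nabla u\|\le\nu\alpha$, and then Young's inequality to absorb the $\|\nabla e_{k+1}\|$ factor into the viscous term, producing $\tfrac{\nu}{2}\|\nabla e_{k+1}\|^2+\tfrac{\nu\alpha^2}{2}\|e_k\|\,\|\nabla e_k\|$ on the right. The decisive gain is that the $L^2$-weighted part of the $*$-norm lets me extract a power of $H$: by \eqref{d1} and AM--GM, $\|e_k\|\,\|\nabla e_k\|\lesssim H\|e_k\|_{*}^2$. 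For the left side I would invoke the interpolation estimate \eqref{interpolationi} in the form $\|e_{k+1}\|^2\le 2\|I_H e_{k+1}\|^2+2C_I^2H^2\|\nabla e_{k+1}\|^2$; combined with the hypothesis $\mu\ge\nu/(4C_I^2H^2)$ this yields $\tfrac{\nu}{2}\|\nabla e_{k+1}\|^2+\mu\|I_H e_{k+1}\|^2\ge\tfrac{\nu}{4}\|e_{k+1}\|_{*}^2$. Chaining the two bounds gives $\|e_{k+1}\|_{*}^2\lesssim H\alpha^2\|e_k\|_{*}^2$, and tracking the constants yields the stated rate $\sqrt{2C_IH}\,\alpha$; the hypothesis $2C_IH\alpha^2<1$ is precisely what makes this rate strictly less than one, so the iteration contracts for every initial guess.

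The main obstacle is the bookkeeping in the left-side lower bound: the nudging controls $\|I_H e_{k+1}\|$ rather than $\|e_{k+1}\|$, so I must trade the viscous dissipation against the interpolation defect $\|e_{k+1}-I_H e_{k+1}\|$ carefully enough that a full copy of the weighted $L^2$ term survives with the right constant while enough of $\nu\|\nabla e_{k+1}\|^2$ remains (after the Young step) to reconstruct the gradient part of $\|e_{k+1}\|_{*}$. This is exactly where the threshold $\mu\ge\nu/(4C_I^2H^2)$ is forced, and it is the reason the analysis lives in the $*$-norm rather than the plain $H^1$ norm. A secondary point to verify is that the $\sqrt H$ improvement over the usual Picard rate $\alpha$ stems entirely from using the half-power trilinear bound on $b(e_k,u,e_{k+1})$, so that the factor $\|e_k\|^{1/2}$ converts into $H^{1/2}$ through the $*$-norm weight; the coarser bound $M\|\nabla e_k\|\,\|\nabla u\|\,\|\nabla e_{k+1}\|$ would only recover the $\alpha$-rate.
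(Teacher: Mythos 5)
Your proposal is correct and follows essentially the same route as the paper: the identical error equation, testing with $e_{k+1}$, the sharp half-power trilinear bound with the a priori estimate, Young's inequality, and the same interpolation-based lower bound forced by $\mu\ge \nu/(4C_I^2H^2)$. The only cosmetic difference is that you optimize the $\|e_k\|\,\|\nabla e_k\|$ term directly against the $*$-norm weight via AM--GM, whereas the paper introduces a free parameter $B$ and chooses it at the end --- these are the same computation.
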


\begin{remark}
	Theorem \ref{Pcac} shows that convergence is guaranteed for any $\alpha$, provided enough data is available (i.e. $H$ is small enough).  It also shows that the convergence rate $O(H^{1/2}\alpha)$ is improved by including data measurements.
\end{remark}

\begin{proof}
Let $e_k=u-u_k$.
	Subtracting \eqref{wd}  from \eqref{CDAPicard} and then rearranging terms gives us,
	\begin{equation}\label{sb}
		\begin{split}
			0&=a(u,v)-a(u_{k+1},v)+b(u,u,v)-b(u_k,u_{k+1},v)-\mu (I_Hu_{k+1}-I_H u,I_Hv)\\
			&=a(u-u_{k+1},v)+b(u-u_k,u,v)+b(u_k,u-u_{k+1},v)+\mu (I_Hu-I_Hu_{k+1},I_Hv)\\
			&=a(e_{k+1},v)+b(e_k,u,v)+b(u_k,e_{k+1},v)+\mu (I_He_{k+1},I_Hv).
		\end{split}
	\end{equation}
	Taking $v=e_{k+1}$ in equation (\ref{sb}), applying the bounds on $b$, Young's inequality, and the inequality (\ref{Pri}), we deduce an upper bound for the term $\nu\|\nabla e_{k+1}\|^2+\mu\|I_He_{k+1}\|^2$:
	\begin{equation}\label{upper}
		\begin{split}
			\nu\|\nabla e_{k+1}\|^2+\mu\|I_He_{k+1}\|^2&=a(e_{k+1},e_{k+1})+\mu (I_He_{k+1},I_He_{k+1})
			\\&=-b(e_k,u,e_{k+1})\\
			&\leq M\|e_k\|^{\frac{1}{2}}\|\nabla e_k\|^{\frac{1}{2}}\|\nabla u\| \|\nabla e_{k+1}\|\\
			&\leq \alpha\nu\|e_k\|^{\frac{1}{2}}\|\nabla e_k\|^{\frac{1}{2}} \|\nabla e_{k+1}\|\\
			&\leq \frac{\nu}{2}\|\nabla e_{k+1}\|^2+\frac{\nu\alpha^2}{2}\| e_{k}\|\|\nabla e_{k}\|\\
			&\leq \frac{\nu}{2}\|\nabla e_{k+1}\|^2+B\nu\|\nabla e_{k}\|^2+\frac{\nu\alpha^4}{16B}\| e_{k}\|^2, 
		\end{split}
	\end{equation}
	where $B>0$ is arbitrary for now.	
	Using inequality (\ref{interpolationi}) and the triangle inequality, we bound the term $\nu\|\nabla e_{k+1}\|^2+\mu\|I_He_{k+1}\|^2$ from below by
	\begin{equation}\label{lowerb}
		\begin{split}
			\nu  \|\nabla e_{k+1}\|^2+\mu\|I_He_{k+1}\|^2 & =\frac{\nu}{4}\|\nabla e_{k+1}\|^2+\mu\|I_He_{k+1}\|^2+\frac{3\nu}{4}\|\nabla e_{k+1}\|^2\\
			&\geq\frac{\nu}{4C_I^2H^2}\|e_{k+1}-I_He_{k+1}\|^2+\mu\|I_He_{k+1}\|^2+\frac{3\nu}{4}\|\nabla e_{k+1}\|^2\\
			&\geq \lambda \left(\|e_{k+1}-I_He_{k+1}\|^2+\|I_He_{k+1}\|^2\right)+\frac{3\nu}{4}\|\nabla e_{k+1}\|^2\\
			&\geq \frac{\lambda}{2} \|e_{k+1}\|^2+\frac{3\nu}{4}\|\nabla e_{k+1}\|^2,
		\end{split}
	\end{equation}
	where $\lambda=\min\{\mu,\frac{\nu}{4C_I^2H^2}\} = \frac{\nu}{4C_I^2H^2}$ by assumption on $\mu$ being sufficiently large. 
	Combining (\ref{upper}) and (\ref{lowerb}) and then reducing leads to
	\begin{equation}\label{lower}
		\begin{split}
			\|\nabla e_{k+1}\|^2+\frac{1}{2C_I^2 H^2} \|e_{k+1}\|^2&\leq			4 B\left(\| \nabla e_{k}\|^2+\frac{\alpha^4}{16B^2}\| e_{k}\|^2\right).
		\end{split}
	\end{equation}
	Choosing $B = \sqrt{ \frac{\alpha^4C_I^2 H^2}{8} }$, we get  
		$\frac{1}{2C_I^2 H^2}= \frac{\alpha^4}{16B^2}$
	and so
{\color{red}	{	\begin{align}\label{PN}
			\|\nabla e_{k+1}\|^2+\frac{1}{2C_I^2 H^2} \|e_{k+1}\|^2&\leq			\sqrt{2}\alpha^2 C_I H \left(\| \nabla e_{k}\|^2+\frac{1}{2C_I^2 H^2}\| e_{k}\|^2\right).
	\end{align}}}
Now utilizing the norm $\|\cdot\|_*$, we have the reduced estimate
\begin{align}
	\|e_{k+1}\|_{*}&\leq \sqrt{{\color{red}\sqrt{2}} C_IH}\alpha\|e_{k}\|_{*}.
\end{align}
The proof is now completed.  
\end{proof}
\vspace{2mm}

\subsection{Numerical tests for CDA-Picard}\label{PCTest}

We now illustrate the above theory, and show how incorporating measurement data via CDA can accelerate and even enable convergence in the Picard iteration for the steady NSE.  In all of our tests, we take $u_0=0$ and {\color{red} take as $I_H$ the $L^2$ projection onto $X_H=P_0(\tau_H)$ and implement it  by using a first order quadrature approximation, i.e. with algebraic nudging \cite{RZ21}.  While our analysis does not consider the discretization of the nonlinear iterations (although it is analogous and gives analogous results), this implementation can lead in 3D (but not 2D) to a larger constant $C_I$ from \eqref{interpolationi} that may depend on $h^{-1/2}$ \cite{RZ21}.  For these first proof of concept tests below (where $h$ is fixed and not too small) we believe the ease in implementation is worth the potentially increased constant, however practitioners working on 3D problems with limited measurement data may need to consider higher order quadrature approximation for the interpolation operator. }

\subsubsection{Scaling of the linear convergence rate with $H$}

For our first test, we calculate the scaling with $H$ of the *-norm convergence, using the benchmark 2D driven cavity problem on the unit square $\Omega=(0,1)^2$.  We use no external forcing $f=0$ and Dirichlet boundary condition, that is, no-slip on the sides and bottom, and $\langle 1,0\rangle^T$ on the lid.  In this problem $Re:=\nu^{-1}$.  For this test of the scaling with $H$, we choose $Re=100$ and compute with $(P_2,P_1)$ elements on a uniform $\frac{1}{64}$ triangulation, varying $H$ and direct enforcement of the measurement data.  A convergence plot is shown in figure \ref{CP}, and we observe an improvement in the convergence rate as $H$ is decreased. Table \ref{tableP} shows the calculated linear convergence rates in the $*$-norm, which are consistent with a $H^{1/2}$ scaling although it is not clearly 0.5 for the exponent.   This scaling result is asymptotic, and already at $H=$1/64 we have reached $H=h$.  We note other test problems were tried as well as finer meshes, and similar results were found for scaling with $H$.

\begin{figure}[H]
\centering
\includegraphics[height=5.5cm,viewport=0 0 420 290, clip]{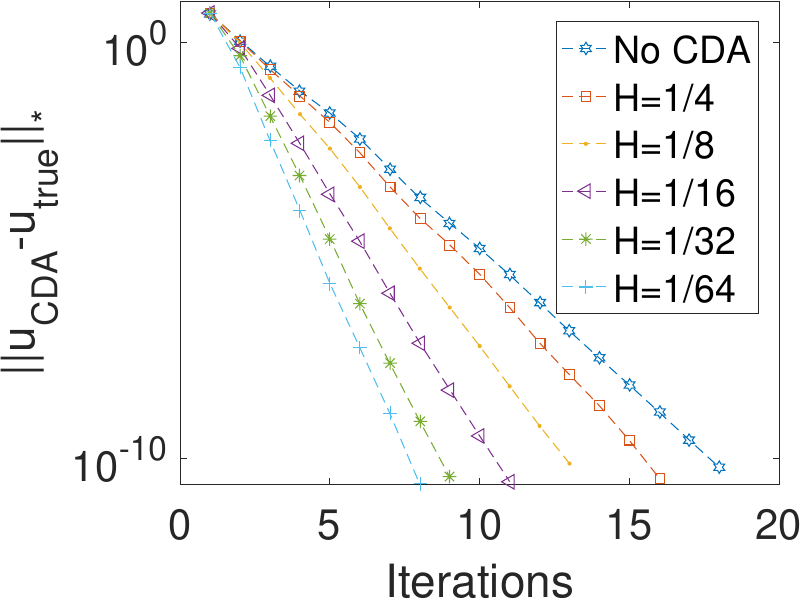}
\caption{Shown above are residuals in the CDA-Picard convergence tests}\label{CP}
\end{figure}

\begin{table}[H]
\centering
\begin{tabular}{|c|c|c|c|}
	\hline
	H &  {\# iterations}&{linear conv rate in $*$-norm}&{scaling of linear rate wrt $H$}\\ \hline
1/4 &      16      &      0.1814  &              ----\\ \hline
1/8  &     13     &       0.1211    &            0.5829\\ \hline
1/16 &     11   &         0.0705    &            0.7805\\ \hline
1/32  &    9   &          0.0371      &          0.9260\\ \hline
1/64   &   8 &            0.0231       &         0.6835\\ \hline
\end{tabular}
\caption{Shown above are linear convergence rates for CDA-Picard with varying $H$, and their calculated scaling with $H$.
}\label{tableP}
\end{table}

\subsubsection{2D driven cavity}

For our next test we again consider CDA-Picard applied to the benchmark 2D driven cavity problem on the unit square $\Omega=(0,1)^2$.  We now consider $Re$=3,000,\ 5,000, and 10,000 and $(P_2,P_1^{disc})$ elements on a mesh of $\Omega$ constructed as a barycenter refinement (also called Alfeld split in the Guzman-Neilan vernacular) of a barycenter refined uniform $\frac{1}{128}$ triangulation; it is known from \cite{arnold:qin:scott:vogelius:2D} that this element choice on this type of structured mesh is inf-sup stable.

\begin{figure}[ht]
\center
Re=3,000 \hspace{1in} Re=5,000 \hspace{1in} Re=10,000 \\
\includegraphics[width = .27\textwidth, height=.28\textwidth,viewport=115 45 465 390, clip]{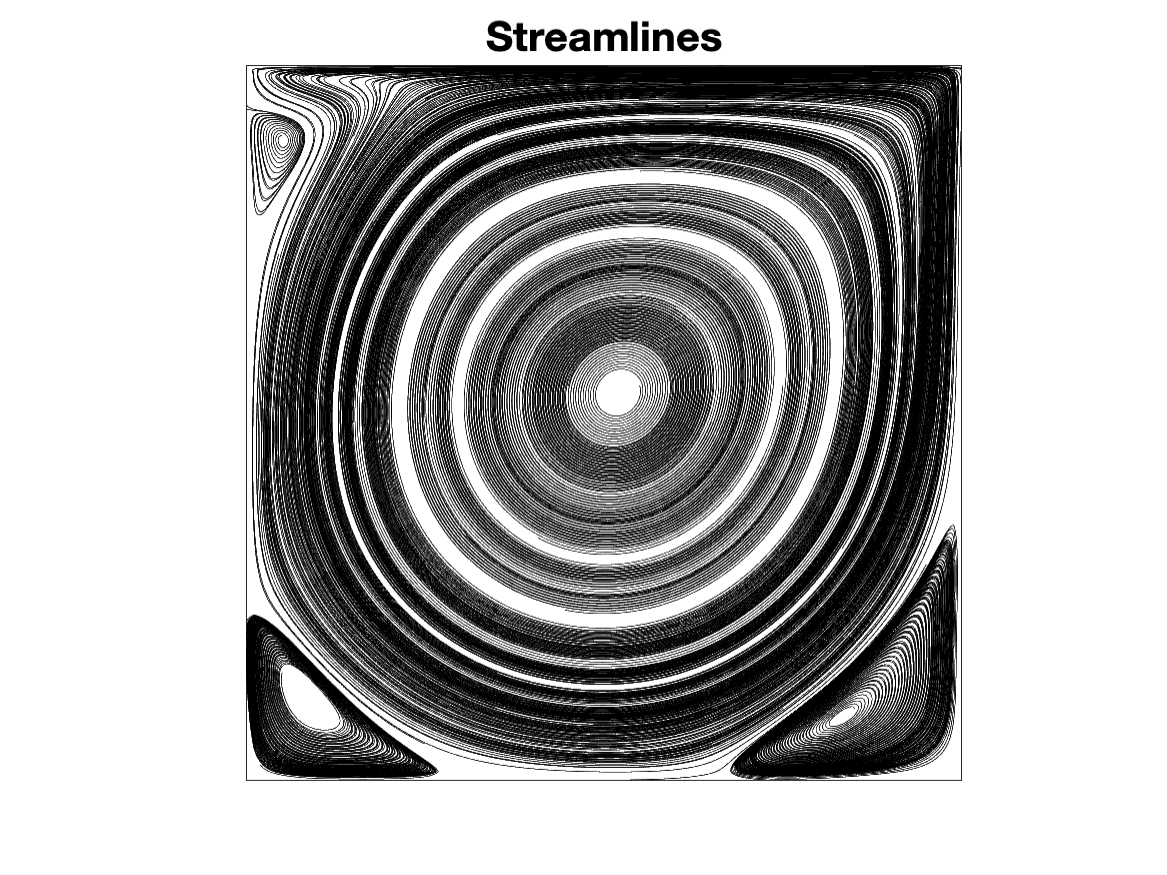}  
\includegraphics[width = .27\textwidth, height=.28\textwidth,viewport=115 45 465 390, clip]{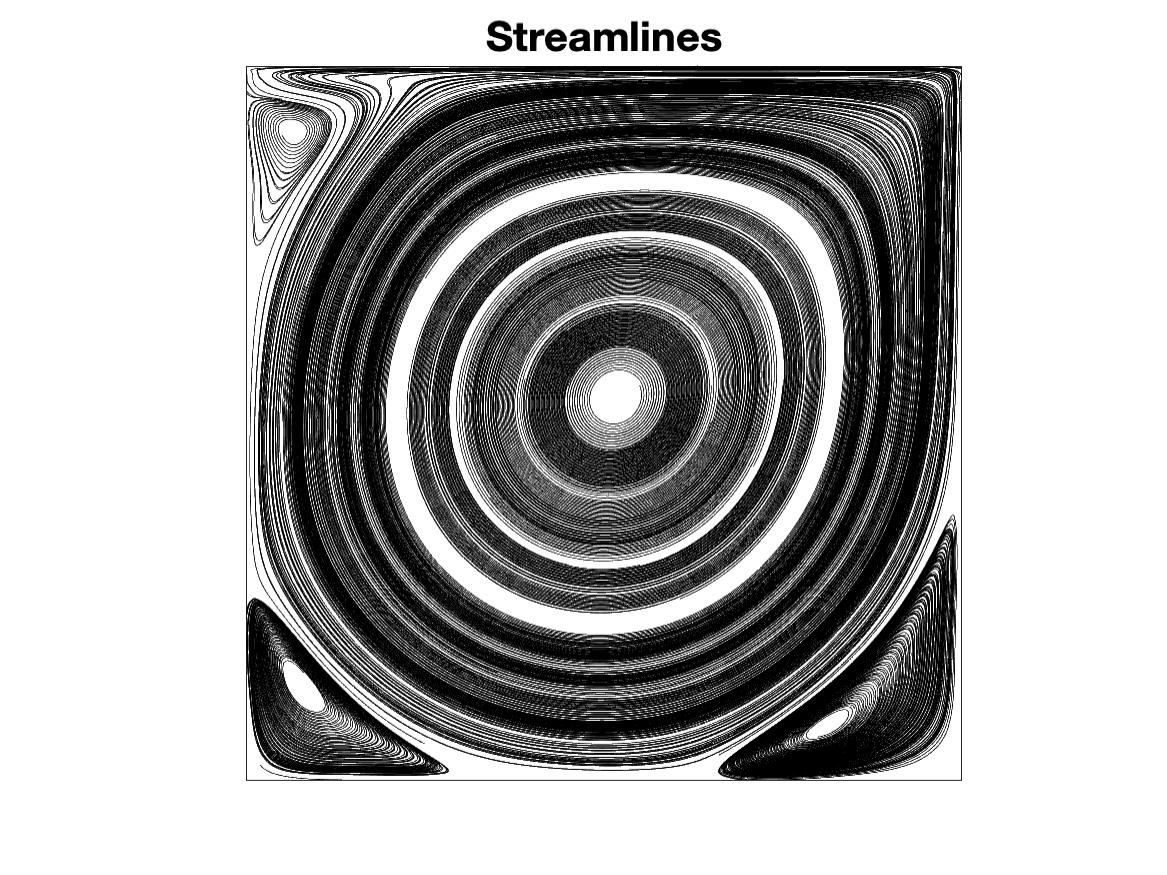}  
\includegraphics[width = .27\textwidth, height=.28\textwidth,viewport=115 45 465 390, clip]{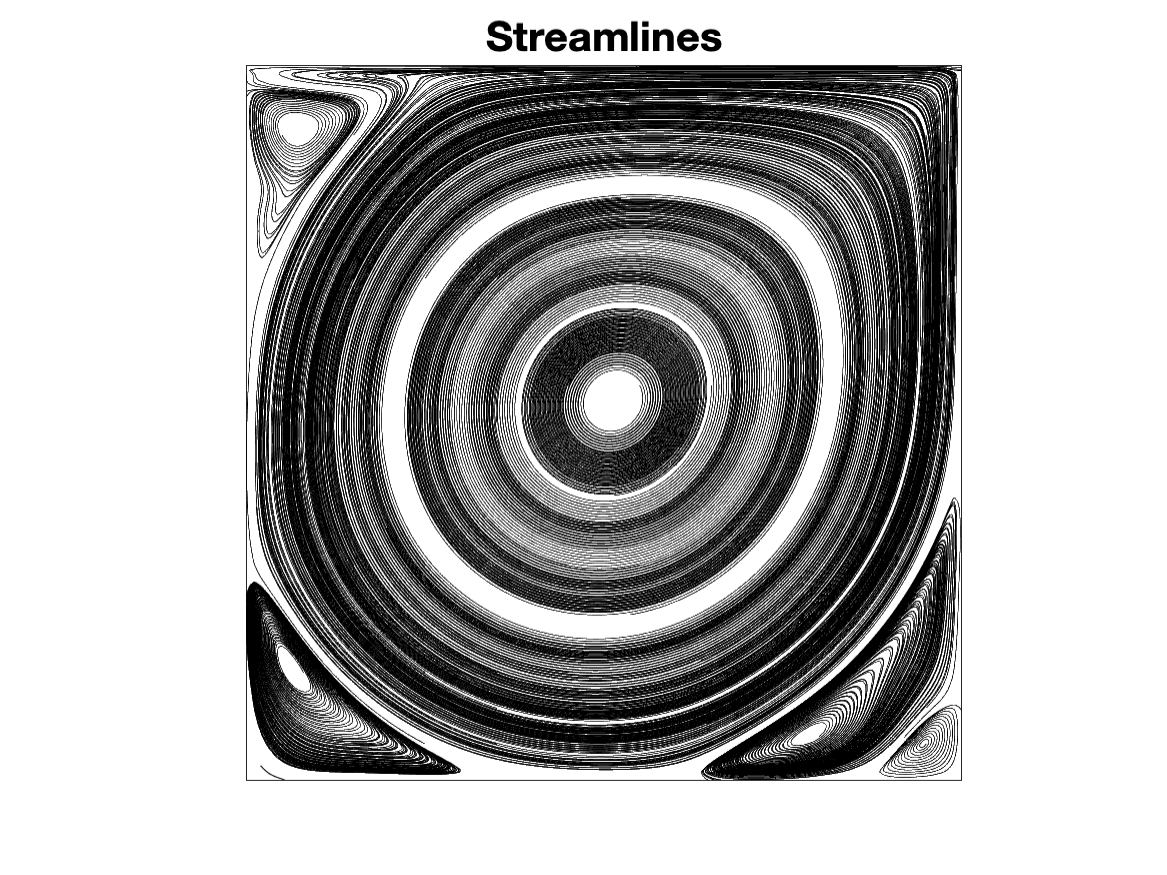}  
\caption{\label{nseplot1} The plot above shows streamlines of the solution of the 2D driven cavity problem at $Re=$3,000 (left), 5,000 (center), and 10,000 (right).}
\end{figure}

We first test convergence of solutions using CDA nudging $(0<\mu<\infty$) to the direct enforcement.  We do with this with two sets of parameters: $Re$=3,000 and $H=\frac{1}{10}$, and $Re$=5,000 and $H=\frac{1}{32}$.  Results are shown in figure \ref{convplotsmu} as $H^1$ error versus iteration number, and we observe that as $\mu$ increases, the convergence behavior becomes indistinguishable from that of direct enforcement (labeled $\mu=\infty$ in the plots).  This is consistent with our analysis which suggests that `raising $\mu$ large does no harm', and justifies the use of CDA through direct enforcement of the data measurements.

\begin{figure}[ht]
\center
\includegraphics[width = .48\textwidth, height=.35\textwidth,viewport=0 0 550 405, clip]{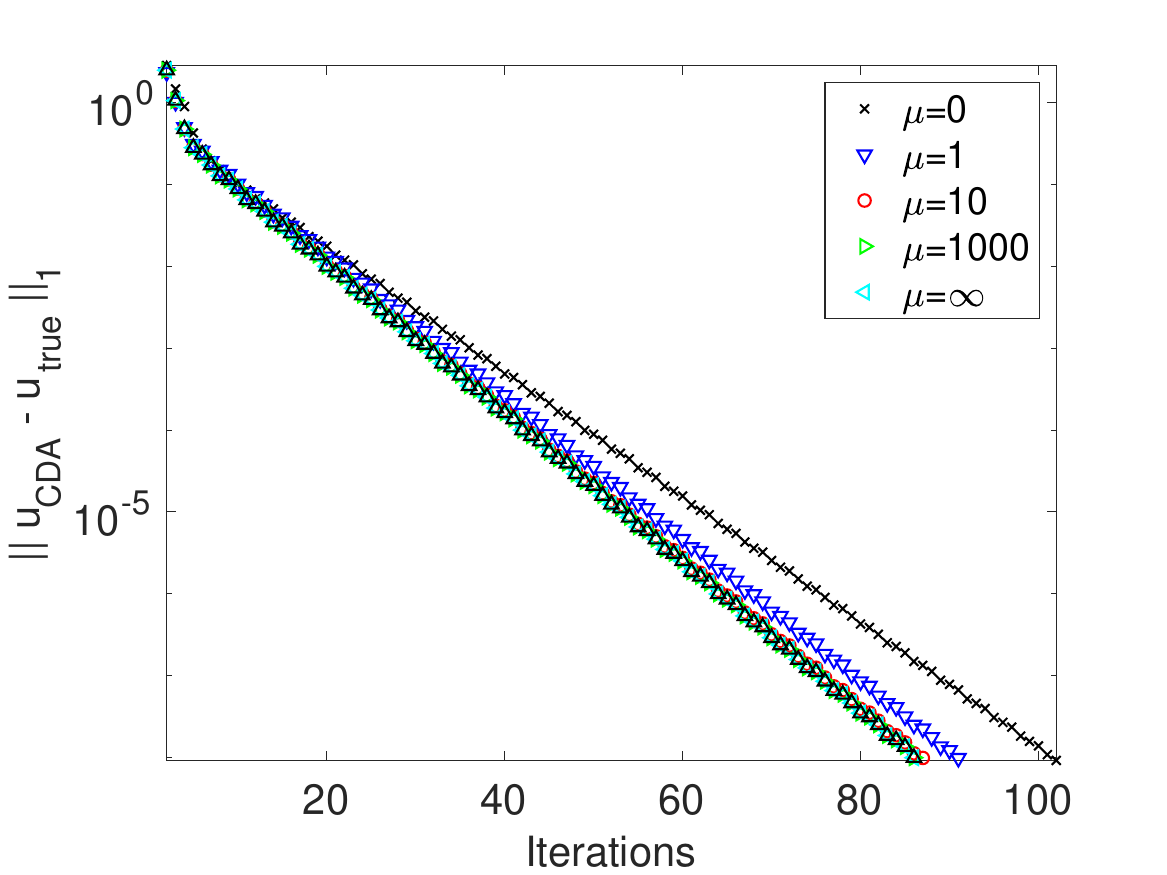}
\includegraphics[width = .48\textwidth, height=.35\textwidth,viewport=0 0 550 405, clip]{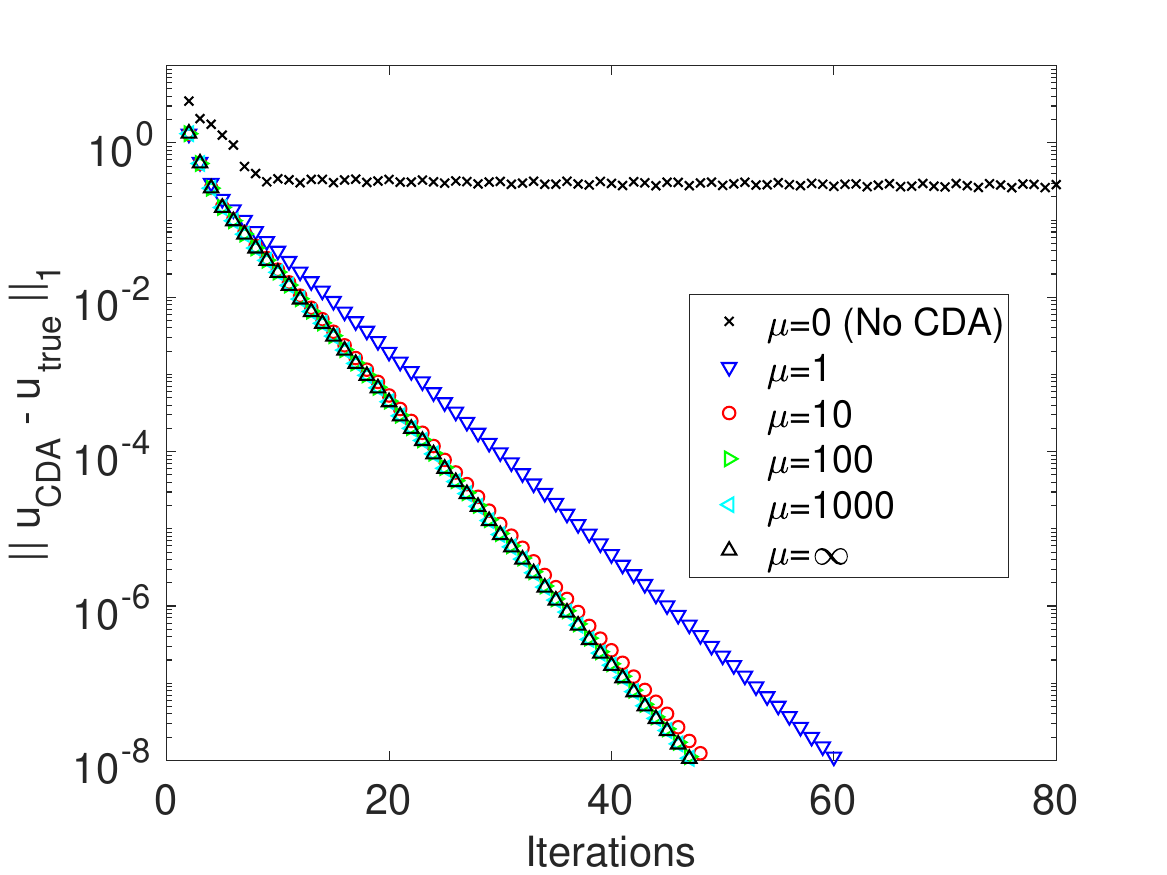}
\caption{\label{convplotsmu} Shown above is the convergence behavior for $Re=$3,000 and $H=\frac{1}{10}$ (left) and $Re=$5,000 and $H=\frac{1}{32}$ (right), with varying $\mu$ and with direct enforcement (labeled as $\mu=\infty$).}
\end{figure}

Next we test the CDA-Picard iteration convergence for $Re=$3,000 and $Re=$5,000 with direct enforcement of CDA and varying $H$.  Convergence results are shown in figure \ref{convplotsH}, and reveal a clear improvement from CDA.  As expected, the more data that is known, the faster the convergence.  For $Re$=3,000, significant improvement can be observed with $H=\frac{1}{16}$ and smaller, and for $Re=5,000$ the use of CDA enables convergence since without it the usual Picard method fails to converge.

\begin{figure}[ht]
\center
\includegraphics[width = .48\textwidth, height=.35\textwidth,viewport=0 0 550 405, clip]{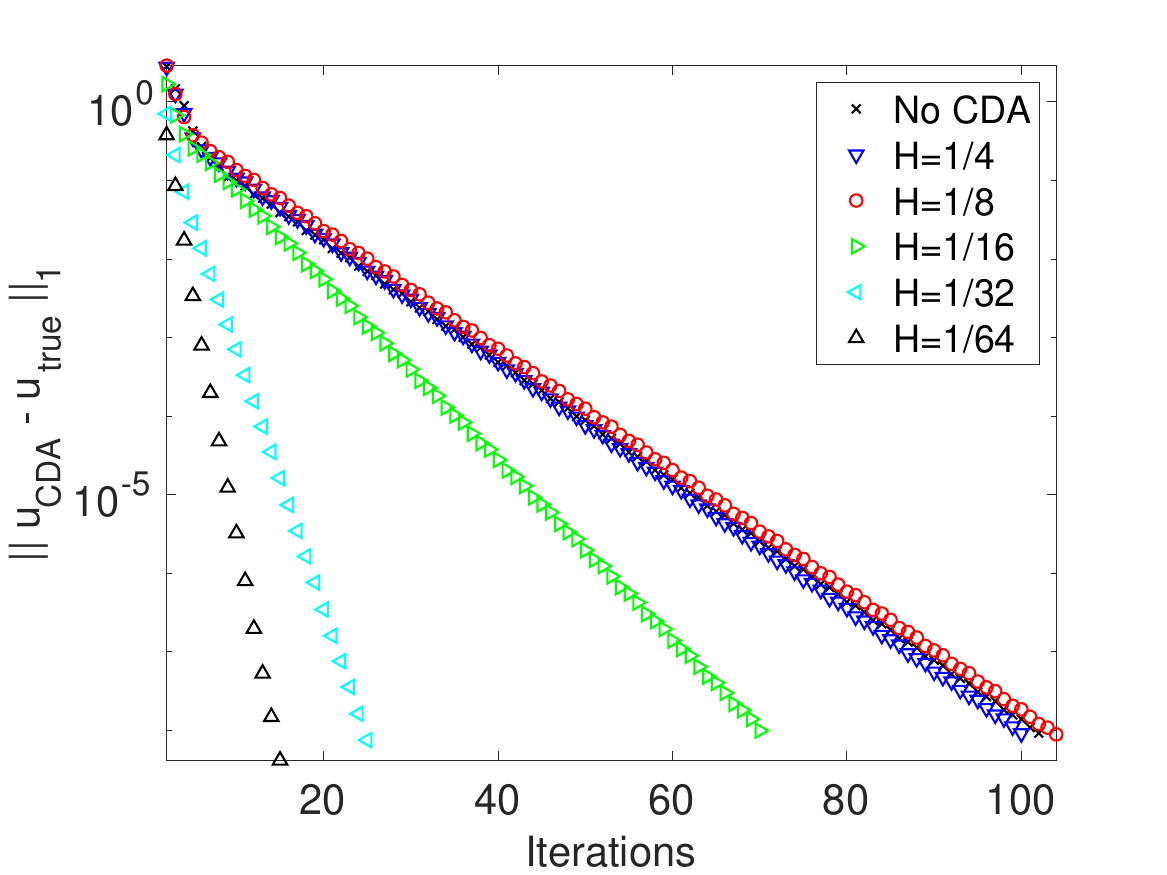}
\includegraphics[width = .48\textwidth, height=.35\textwidth,viewport=0 0 550 405, clip]{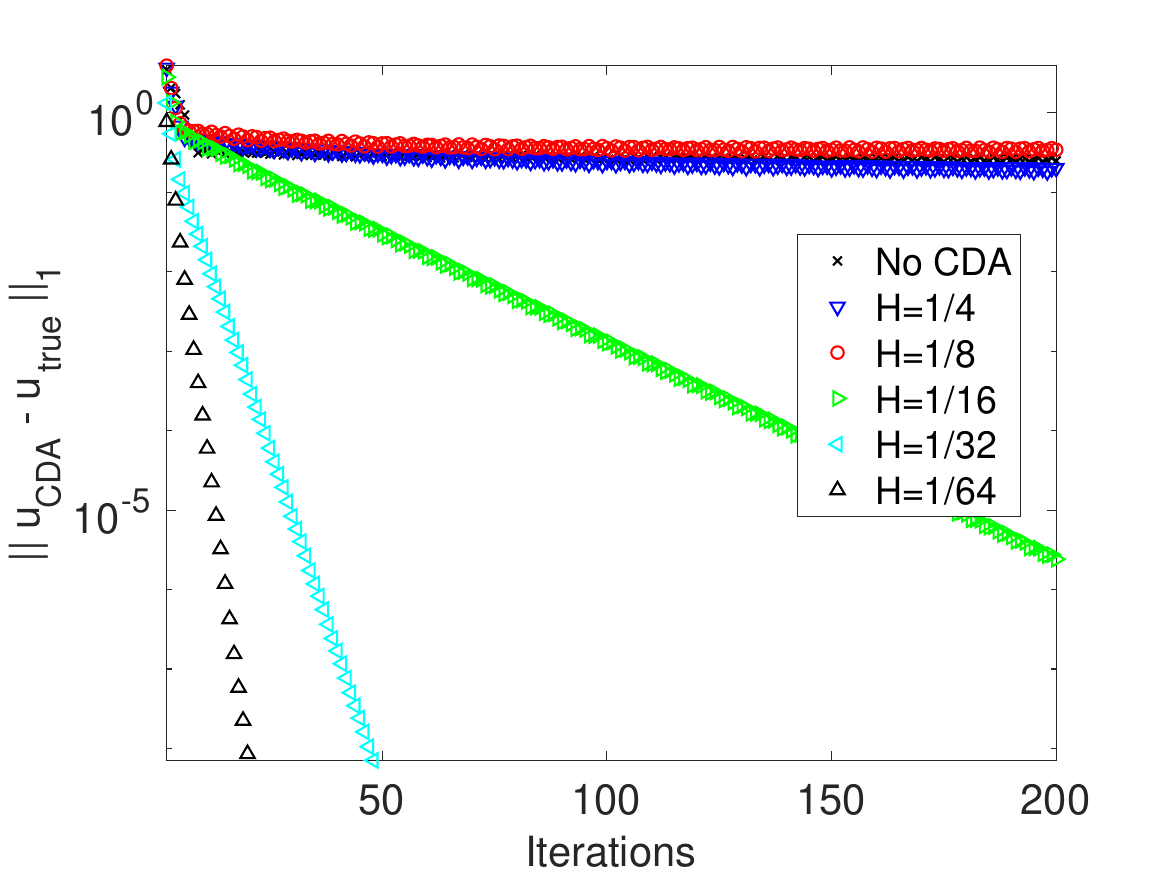}
\caption{\label{convplotsH} Shown above is the convergence behavior for $Re=3,000$ (left) and $Re=5,000$ (right), with varying $H$ and using direct enforcement for CDA implementation.}
\end{figure}

Lastly for the 2D driven cavity, we test with $Re=$10,000, which is well known to be a difficult problem for nonlinear solvers.   We first test the method with varying $H$ and direct CDA enforcement, and convergence results are shown in figure \ref{convplotsH2}.  We observe Picard and CDA-Picard do not perform well, and we need $H\le \frac{1}{32}$ to begin to see slow convergence but $H=\frac{1}{64}$ to see reasonably fast convergence.  Since $h$ is not much smaller than this $H$, this suggests CDA is not particularly useful in this case.  From \cite{PRX19,PR21}, we know that AA is known to help with this test problem, and so we use AA-Picard with depth $m=5$ and no relaxation and find much better convergence, see figure \ref{convplotsH2} at right.  We also test this method with CDA, i.e. CDA-AA-Picard, and see from the figure that CDA has a significant positive effect on convergence.  Hence, combining CDA with AA gives the best acceleration.

\begin{figure}[ht]
\center
\includegraphics[width = .48\textwidth, height=.35\textwidth,viewport=0 0 550 405, clip]{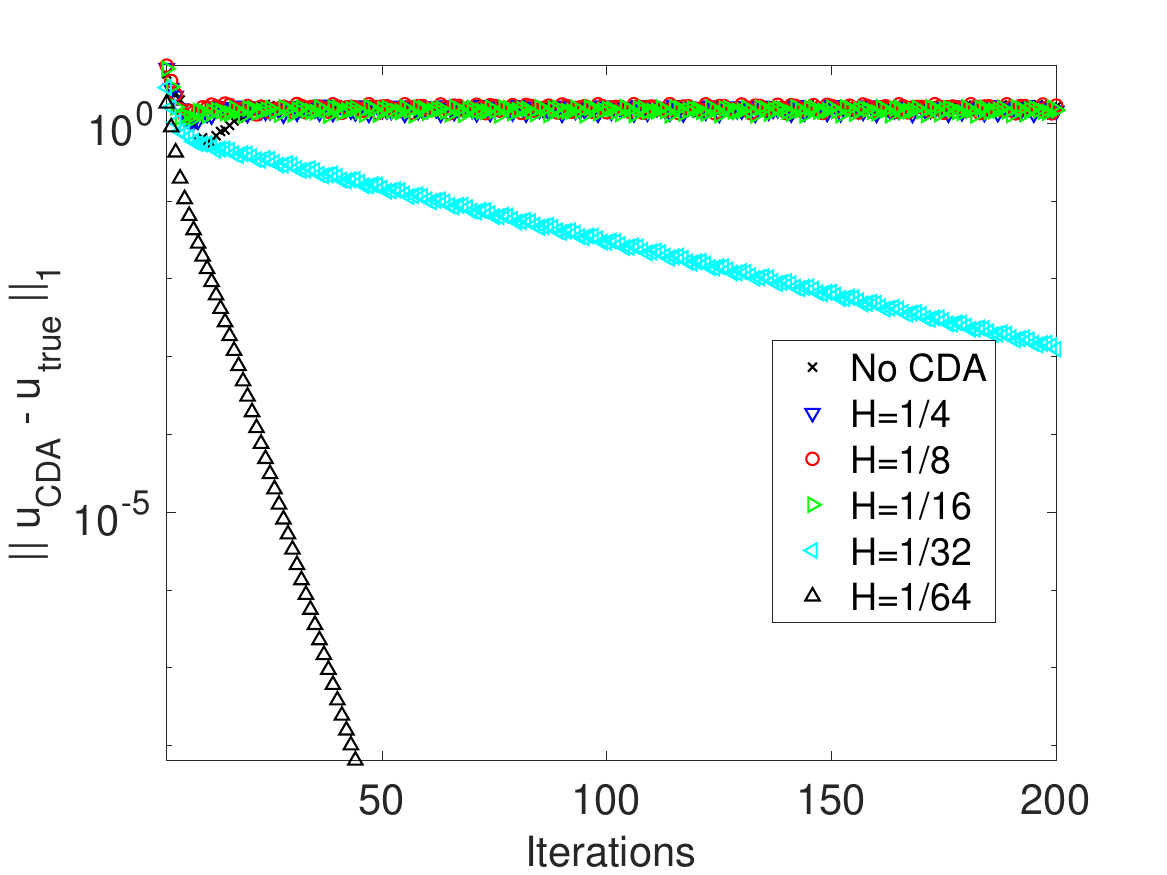}
\includegraphics[width = .48\textwidth, height=.35\textwidth,viewport=0 0 550 405, clip]{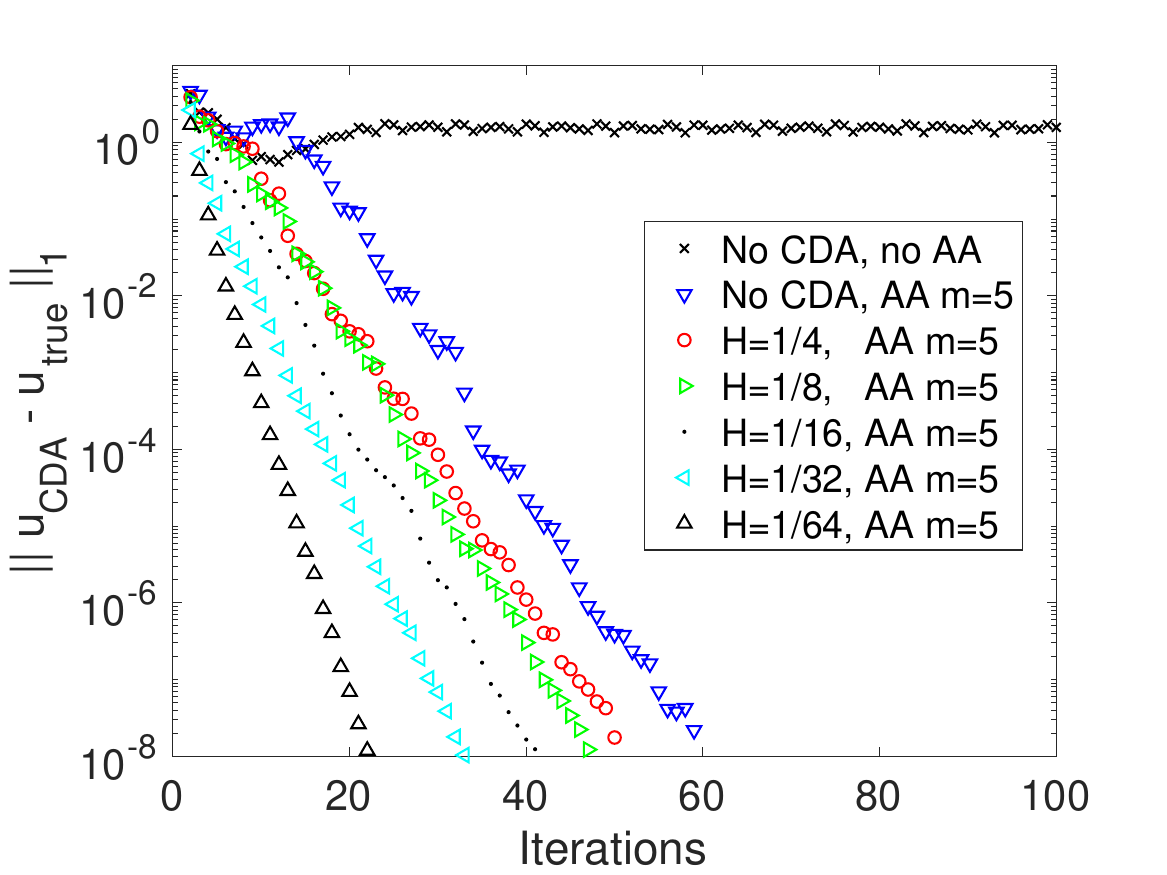}
\caption{\label{convplotsH2} Shown above is the convergence behavior for $Re=10,000$, with no Anderson acceleration (left) and with $m=5$ Anderson acceleration (right), with varying $H$ and using direct enforcement for CDA implementation.}
\end{figure}

\subsection{3D driven cavity}

		Our last test for CDA-Picard is using the 3D lid-driven cavity. In this problem, the domain is the unit cube, there is no forcing $ (f = 0)$, and homogeneous Dirichlet boundary conditions are enforced on all walls and $u=\langle 1,0,0\rangle$ on the moving lid.  We compute with $(P_3, P_2^{disc})$ Scott-Vogelius elements on barycenter refined (Alfeld split) tetrahedral meshes with 796,722  total dof that are weighted towards the boundary by using a Chebychev grid before tetrahedralizing. This velocity-pressure pair is known to be LBB stable from \cite{Z05_Alfed}.  We test CDA-Picard with varying $Re$= 400, 1000 and 1500, and solution plots we found are shown in Figure \ref{fig:midslideplanes} and are in agreement with the literature \cite{WongBaker2002}.  In these tests, we use direct enforcement of CDA.
		
\begin{figure}[H]
			\centering
			\includegraphics[width = 0.95\textwidth, height=.27\textwidth,viewport=100 0 1100 300, clip]{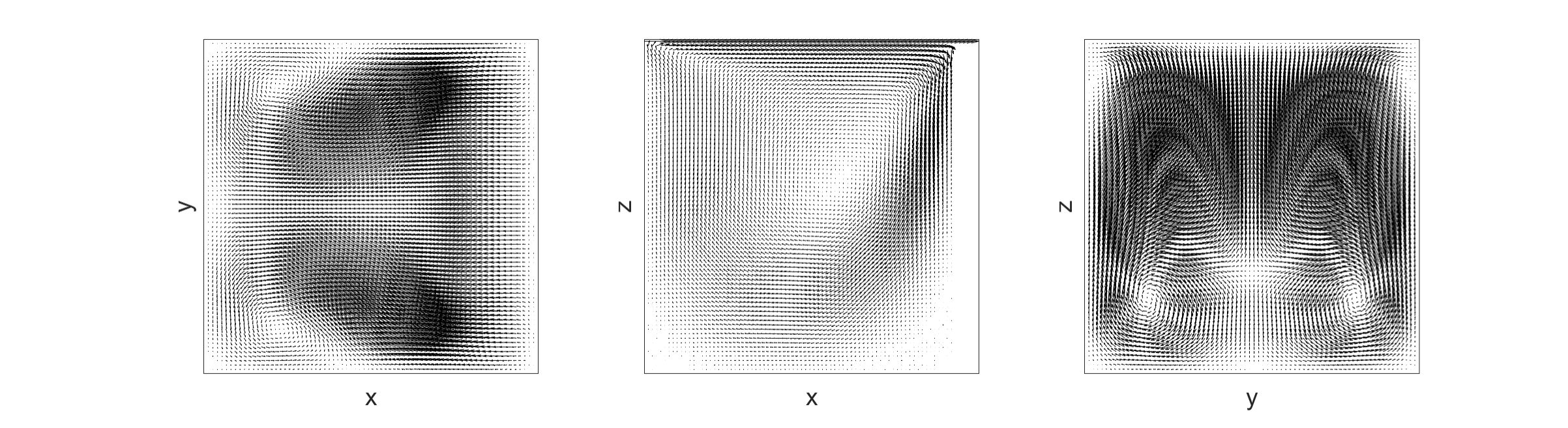}
			\includegraphics[width = 0.95\textwidth, height=.27\textwidth,viewport=100 0 1100 300, clip]{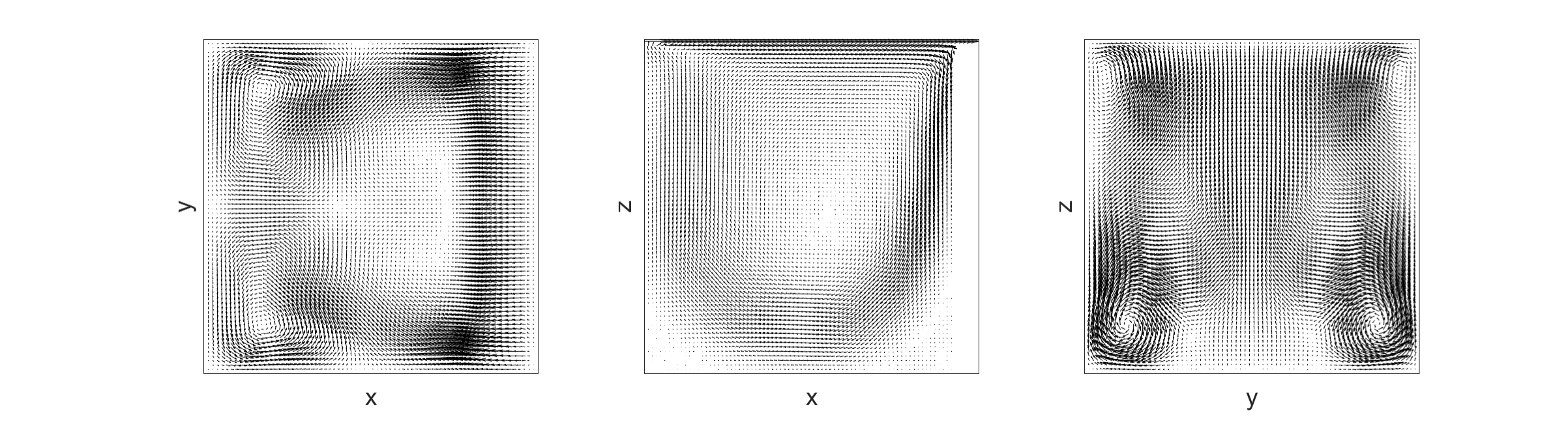}
			\includegraphics[width = 0.95\textwidth, height=.27\textwidth,viewport=100 0 1100 300, clip]{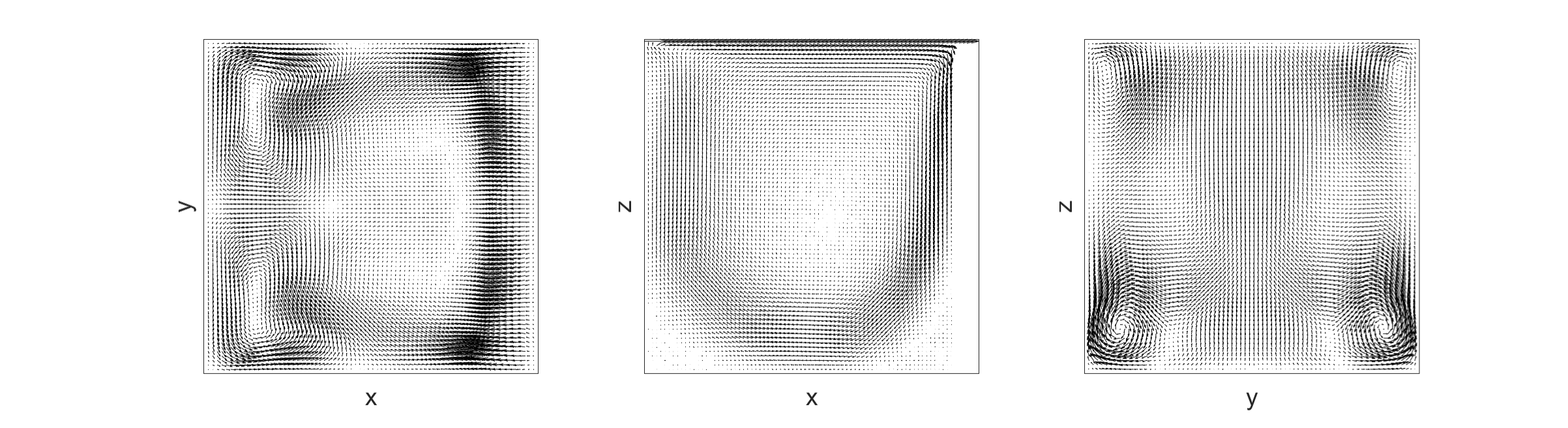}
			\caption{Shown above are the midsliceplane plots of solutions for the 3D driven cavity simulations at $Re$ = 400 (top), 1000 (middle) and 1500 (bottom).\label{fig:midslideplanes}}
\end{figure}

		Figure \ref{fig:re400CDA} shows the convergence for $Re=400$ with varying $H$.  We observe that without CDA, the Picard iteration fails, and also CDA-Picard fails with $H=1/4$.  Once $H=1/8$, convergence is achieved  and then improved for smaller $H$.
				
		\begin{figure}[H]
			\centering
			\includegraphics[width = .48\textwidth, height=.35\textwidth]{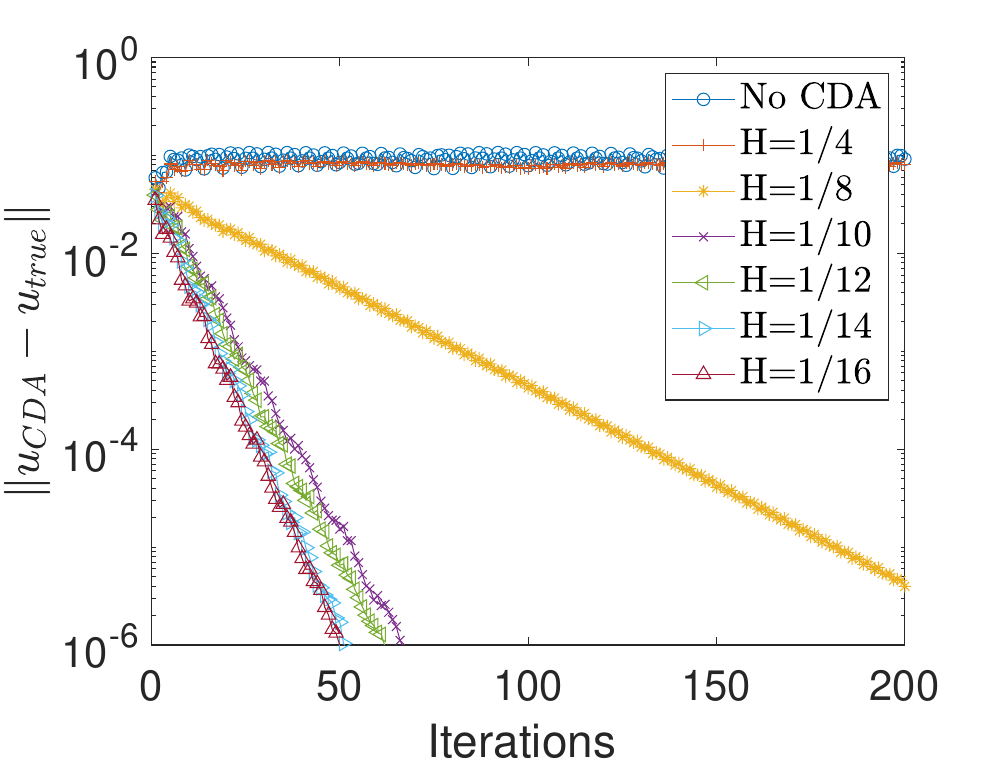}
			\caption{Convergence of CDA-Picard iterations for varying number of measurements for $Re=400$.\label{fig:re400CDA}}
		\end{figure}
	
		For $Re\ge 1000$, convergence with CDA-Picard fails without an excessive amount of data measurements (need $H\le \frac{1}{64}$, plots omitted).  Hence just as in the 2D case, we equip CDA-Picard with AA, using here $m=10$.  For $Re=1500$, we also use AA relaxation of $\beta_k=0.5$.  In figure \ref{fig:re1000CDAm1beta1}, we show the convergence of CDA-Picard for $Re=1000$ and 1500 with varying $H$.  Here we observe that AA-Picard converges in both cases, and is accelerated by using CDA-AA-Picard.

		\begin{figure}[H]
			\centering
			\includegraphics[width = .48\textwidth, height=.35\textwidth]{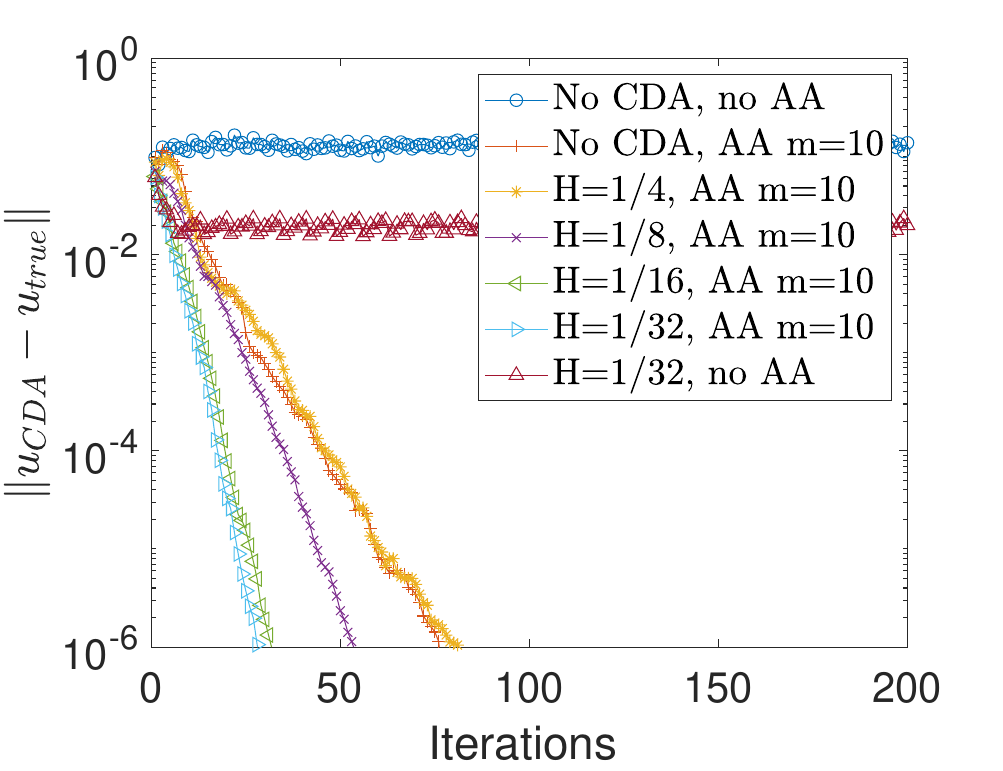}
						\includegraphics[width = .48\textwidth, height=.35\textwidth]{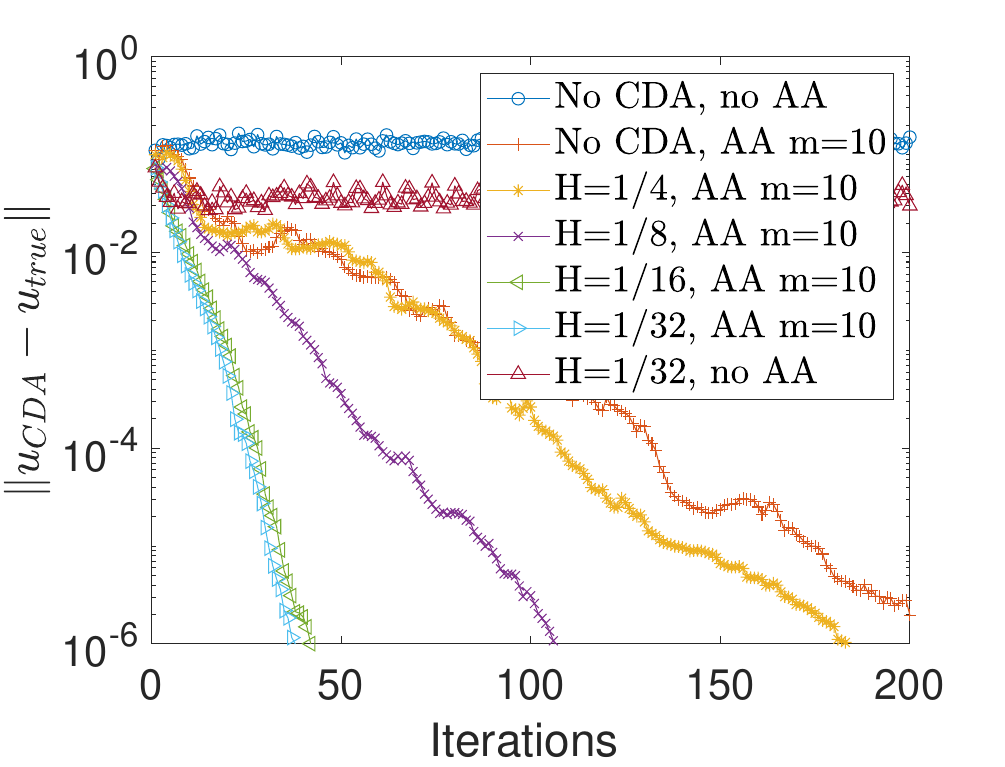}
			\caption{Convergence of CDA-Picard with AA ($m=10$)  for $Re=1000$ (left) and $Re=1500$ (right) with varying $H$.\label{fig:re1000CDAm1beta1}}
		\end{figure}

\section{Analysis of CDA-Newton}\label{CDA-Newton}

Although Newton's method converges quadratically in an asymptotic sense, it has a well-known issues (in general, but in particular for steady NSE) in a good initial guess is required for convergence and may also have stability issues, see Lemmas \ref{Nstable} and \ref{nc}.  This is opposed to Picard, which is stable for steady NSE.  For Newton, these lemmas show a sufficient condition for convergence of Newton for steady NSE is that $\frac{M}{\nu(1-\alpha)}\|\nabla (u - u_{0}\|<1$ and $\alpha \leq \frac18$.  While these conditions are not sharp, they are believed reasonably sharp and usually accurate to within an order of magnitude.  In this section we will show that by incorporating CDA into the iteration, we can reduce the closeness condition required for the initial guess and remove the smallness restriction on $\alpha$ needed for quadratic convergence.  We will also show results of numerical tests that reveal significant improvements that CDA provides.  

CDA-Newton is given as follows.  Let $u$ be a steady NSE solution, and suppose we are given solution measurement data $I_H u$ and $u_k\in V$.  Then we want to find $u_{k+1}\in V$ satisfying for all $v\in V$,
\begin{equation}
\begin{split}\label{CDAN}
	a\left({u}_{k+1},v\right)+ b\left({u}_{k},{u}_{k+1},v\right)&+b\left({u}_{k+1},{u}_{k},v\right)\\&+\mu(I_Hu_{k+1}-I_Hu,I_Hv)=\left\langle{f},v\right\rangle+b\left({u}_{k},{u}_{k},v\right).  
\end{split}
\end{equation}

Our first result shows that under the same smallness condition of Newton (for a fair comparison), CDA-Newton has a larger domain of convergence for the initial iterate.

\vspace{2mm}
\begin{theorem}\label{sn}
Let $u$ be a steady NSE solution and suppose $I_H(u)$ is known and $\mu$ is chosen sufficiently large so that $\mu\ge \frac{\beta}{4C_I^2H^2}$, where $\beta =\nu (1-2\alpha)$.  Assume that $8\alpha<1$,  and 
\begin{align}\label{t1}
	\frac{M\sqrt{2C_IH}}{\beta}\|\nabla (u - u_0) \|<1.
\end{align} 
Then CDA-Newton (\ref{CDAN}) is stable and converges to a solution $u$ of NSE (\ref{wd}) quadratically:
\begin{align}\label{cs}
	\|\nabla (u - u_{k+1})\|\leq \frac{M\sqrt{2C_IH}}{\beta} \|\nabla (u - u_k)\|^2. 
\end{align}
\end{theorem}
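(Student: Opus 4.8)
The plan is to follow the structure of the classical Newton convergence proof (Lemma \ref{nc}), inserting the nudging term and then exploiting it to gain the factor $\sqrt{2C_IH}$ in the contraction constant. First I would set $e_k = u - u_k$ and subtract the weak NSE \eqref{wd} from CDA-Newton \eqref{CDAN}. Expanding the nonlinear combination $b(u,u,v) - b(u_k,u_{k+1},v) - b(u_{k+1},u_k,v) + b(u_k,u_k,v)$ by bilinearity (the quadratic nature of $b$ makes this expansion exact), I expect to arrive at the error equation
\begin{equation*}
a(e_{k+1},v) + b(u_k,e_{k+1},v) + b(e_{k+1},u_k,v) + \mu(I_He_{k+1},I_Hv) = -b(e_k,e_k,v),
\end{equation*}
which is precisely the Newton error identity augmented by the nudging term, with the genuinely quadratic remainder $-b(e_k,e_k,v)$ on the right.

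Next I would test with $v = e_{k+1}$. Skew-symmetry gives $b(u_k,e_{k+1},e_{k+1}) = 0$, and writing $u_k = u - e_k$ splits $b(e_{k+1},u_k,e_{k+1}) = b(e_{k+1},u,e_{k+1}) - b(e_{k+1},e_k,e_{k+1})$. The first piece is controlled by the a priori bound \eqref{Pri}, since $M\|\nabla u\| \le \nu\alpha$, contributing $\nu\alpha\|\nabla e_{k+1}\|^2$; the second contributes a further $\nu\alpha\|\nabla e_{k+1}\|^2$ once the induction hypothesis $M\|\nabla e_k\| \le \nu\alpha$ is in force. Together with the leading $\nu\|\nabla e_{k+1}\|^2$ these produce the coefficient $\beta = \nu(1-2\alpha)$, exactly as for plain Newton, while the nonnegative term $\mu\|I_He_{k+1}\|^2$ is simply discarded from the left.

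The crux, and the place where CDA helps, is the quadratic remainder $-b(e_k,e_k,e_{k+1})$. Here I would use the sharp trilinear estimate to place the mixed norm on a factor of $e_k$, giving a bound of the form $M\|e_k\|^{1/2}\|\nabla e_k\|^{3/2}\|\nabla e_{k+1}\|$, and then convert the $L^2$ factor $\|e_k\|$ into a gradient factor at the cost of a power of $H$. This is exactly where the nudging and the interpolation estimate \eqref{interpolationi} enter: just as in the lower-bound computation \eqref{lowerb} of the Picard proof, the condition $\mu \ge \frac{\beta}{4C_I^2H^2}$ forces $\|I_He_k\|$ to be $O(H\|\nabla e_k\|)$, and combined with $\|e_k - I_He_k\| \le C_IH\|\nabla e_k\|$ this yields $\|e_k\| \le 2C_IH\|\nabla e_k\|$ up to the bookkeeping of constants. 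Substituting gives $|b(e_k,e_k,e_{k+1})| \le M\sqrt{2C_IH}\|\nabla e_k\|^2\|\nabla e_{k+1}\|$, so that after dividing through by $\|\nabla e_{k+1}\|$ we obtain the claimed estimate \eqref{cs}.

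Finally I would close the argument with an induction in $k$: the hypothesis \eqref{t1}, namely $\frac{M\sqrt{2C_IH}}{\beta}\|\nabla e_0\| < 1$, together with $8\alpha < 1$, guarantees that the quadratic map is contractive, so $\|\nabla e_{k+1}\| < \|\nabla e_k\|$ and the errors decrease monotonically to zero, yielding both convergence and stability. I expect the main obstacle to be this inductive bookkeeping: one must simultaneously propagate the smallness of $\|\nabla e_k\|$ needed for the $\beta$-coefficient and the $L^2$ control $\|e_k\| \lesssim H\|\nabla e_k\|$ needed for the $\sqrt{2C_IH}$ gain, and verify that both hypotheses remain consistent from one step to the next under the single smallness condition \eqref{t1}. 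The finite-$\mu$ case is what makes this $L^2$ control require the large-$\mu$ assumption rather than being automatic, since under direct enforcement one simply has $I_He_k = 0$ exactly.
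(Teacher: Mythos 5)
Your error equation and the treatment of $b(u_k,e_{k+1},e_{k+1})$ by skew-symmetry are correct, but the core of your argument --- extracting the factor $\sqrt{2C_IH}$ from the quadratic remainder by writing $|b(e_k,e_k,e_{k+1})| \le M\|e_k\|^{1/2}\|\nabla e_k\|^{3/2}\|\nabla e_{k+1}\|$ and then claiming $\|e_k\| \lesssim H\|\nabla e_k\|$ --- has a genuine gap. When you test the $k$-th error equation with $e_{k+1}$, the nudging contributes $\mu\|I_He_{k+1}\|^2$ on the left: it gives $L^2$ control of the \emph{new} error $e_{k+1}$, not of $e_k$. Nothing in the hypotheses makes $\|I_He_0\|$ small --- $u_0$ is an arbitrary initial guess, so at $k=0$ the best available is Poincar\'e, $\|e_0\| \le C_P\|\nabla e_0\|$, with no factor of $H$; and for $k\ge 1$ the bound $\|e_k\|\lesssim H\|\nabla e_k\|$ would have to be propagated from the previous step's estimate, which you have not done (and which fails at the base case in any event). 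The paper's proof avoids this entirely by putting the mixed norm on the \emph{third} argument, $|b(e_k,e_k,e_{k+1})| \le M\|\nabla e_k\|^2\|\nabla e_{k+1}\|^{1/2}\|e_{k+1}\|^{1/2}$, applying Young's inequality with a free parameter $A$ to get $A\|\nabla e_k\|^4 + \frac{M^2}{4A}\|\nabla e_{k+1}\|\|e_{k+1}\|$, absorbing the resulting $\|e_{k+1}\|^2$ piece into the lower bound $\beta\|\nabla e_{k+1}\|^2 + \mu\|I_He_{k+1}\|^2 \ge \frac{3\beta}{4}\|\nabla e_{k+1}\|^2 + \frac{\beta}{8C_I^2H^2}\|e_{k+1}\|^2$ (the analogue of \eqref{lowerb}), and then choosing $A = \frac{C_IM^2H}{\sqrt{2}\beta}$ so the $\|e_{k+1}\|^2$ terms cancel; the $\sqrt{H}$ enters through this choice of $A$, not through any smallness of $\|e_k\|$.

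A second, smaller problem: to obtain the coefficient $\beta = \nu(1-2\alpha)$ you split $b(e_{k+1},u_k,e_{k+1})$ via $u_k = u - e_k$ and invoke the induction hypothesis $M\|\nabla e_k\|\le \nu\alpha$. That hypothesis is not implied by \eqref{t1}: the theorem permits $\|\nabla e_0\|$ as large as $\beta/(M\sqrt{2C_IH})$, which grows as $H\to 0$, so your base case can fail precisely in the regime the theorem is designed for. The paper instead bounds $|b(e_{k+1},u_k,e_{k+1})| \le M\|\nabla u_k\|\|\nabla e_{k+1}\|^2$ directly and uses the stability estimate $\|\nabla u_k\| \le 2\nu^{-1}\|f\|_{-1}$ (Lemma \ref{Nstable} carries over to CDA-Newton because the nudging term, with $I_H$ on the test function, is nonnegative on the left and can be dropped); this is where the assumption $8\alpha<1$ actually enters.
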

\vspace{2mm}
\begin{remark}
The theorem above shows that decreasing $H$ enlarges the domain of convergence in the initial condition, and also decreases the constant in the quadratic convergence.  Additionally, the choice of $\mu=\infty$ and direct enforcement of the CDA can be done for CDA-Newton just like in CDA-Picard, and not affect the result of the theorem.
\end{remark}

\begin{proof}
The stability proof for Newton in Lemma \ref{Nstable} can be repeated, thanks to the nudging term having $I_H$ on the test function (it immediately produces a left hand side non-negative term that can be dropped), to get $\| \nabla u_k \|\le 2\nu^{-1} \| f \|_{-1}$ for any $k$. 

Set $e_k = u - u_k$.  Subtracting equation (\ref{CDAN}) from (\ref{wd}) and rearranging terms similar to \eqref{N2} gives
\begin{equation}\label{PNewton}
	\begin{split}
		0&=a(e_{k+1},v)+\mu (I_He_{k+1},I_Hv)+b(e_k,e_k,v)+b(e_{k+1},u_k,v)+b(u_k,e_{k+1},v).
	\end{split}
\end{equation}
Taking $v=e_{k+1}$ in equation (\ref{PNewton}), applying a standard bound on the trilinear term $b$, Young's inequality, and (\ref{snewton}), we obtain the bound 
\begin{equation}
	\begin{split}
		\nu\|\nabla e_{k+1}\|^2+\mu\|I_He_{k+1}\|^2&=a(e_{k+1},v)+\mu (I_He_{k+1},I_Hv)\\&=-b(e_k,e_k,e_{k+1})-b(u_k,e_{k+1},e_{k+1})-b(e_{k+1},u_k,e_{k+1})\\
		&=-b(e_k,e_k,e_{k+1})-b(e_{k+1},u_k,e_{k+1})\\
		&\leq M\|\nabla e_{k}\|^2\|\nabla e_{k+1}\|^{\frac{1}{2}}\| e_{k+1}\|^{\frac{1}{2}}+M\|\nabla u_k\|\|\nabla e_{k+1}\|^2\\
		&\leq A\|\nabla e_{k}\|^4+\frac{M^2}{4A}\|\nabla e_{k+1}\|\| e_{k+1}\|+\frac{2M}{\nu}\|f\|_{-1}\|\nabla e_{k+1}\|^2,
	\end{split}
\end{equation}
which gives us 
\begin{equation}\label{1}
	\begin{split}
		\left(\nu-\frac{2M}{\nu}\|f\|_{-1}\right)\|\nabla e_{k+1}\|^2+\mu\|I_He_{k+1}\|^2\leq A\|\nabla e_{k}\|^4+\frac{M^2}{4A}\|\nabla e_{k+1}\|\| e_{k+1}\|,
	\end{split}
\end{equation}
where $A>0.$  
We lower bound the left side of \eqref{1} analogously to what is done above for CDA-Picard in \eqref{lowerb} to get
\begin{equation}\label{22}
	\begin{split}
\frac{3\beta}{4}\|\nabla e_{k+1}\|^2+\frac\lambda2\|e_{k+1}\|^2
\le 
		\beta\|\nabla e_{k+1}\|^2+\mu\|I_He_{k+1}\|^2,
	\end{split}
\end{equation}
where $\lambda=\min\{\mu,
\frac{\beta}{4C_I^2H^2}\}$.  Using Young's inequality on $\frac{M^2}{4A}\|\nabla e_{k+1}\|\| e_{k+1}\|$ results in
\begin{equation}\label{33}
	\begin{split}
		\frac{M^2}{4A}\|\nabla e_{k+1}\|\| e_{k+1}\|&\leq \frac{\beta}{4}\|\nabla e_{k+1}\|^2+\frac{M^4}{16A^2\beta}\| e_{k+1}\|^2.
	\end{split}
\end{equation}
Combining (\ref{1}), (\ref{22}), and (\ref{33}) leads to 
\begin{equation}\label{44}
	\begin{split}
		\frac{\beta}{2}\|\nabla e_{k+1}\|^2+\left(\frac\lambda2-\frac{M^4}{16A^2\beta}\right)\|e_{k+1}\|^2\leq A\|\nabla e_{k}\|^4.
	\end{split}
\end{equation}
Since we assume $\mu$ is chosen large enough, we can take $\lambda=\frac{\beta}{4C_I^2H^2}$.  Choosing $A=\frac{C_IM^2H}{\sqrt{2}\beta}$ now provides
\begin{align}
	\|\nabla e_{k+1}\|\leq \frac{M\sqrt{2C_IH}}{\beta} \|\nabla e_k\|^2. 
\end{align}
This completes the proof since it infers both the constant for quadratic convergence as well as the closeness of the initial condition that yields a contraction condition.
\end{proof}

Theorem \ref{sn} requires a smallness condition $8\alpha< 1$ for the proof. In our next result, we show that with sufficient observation data, CDA-Newton can remove the restriction on $\alpha$.
\begin{theorem}\label{snn}
Let $u$ be a steady NSE solution and suppose $I_H(u)$ is known and $H$ is sufficiently small (or the initial guess is sufficiently good) so that $A$ can be chosen to satisfy
\begin{equation}
~\label{3di} 0<A < \frac{\nu}{8 \|\nabla (u - u_0)\|^2}
\end{equation}
but with the inequality
\[
\frac{\nu}{16C_I^2H^2}-\frac{M^4}{16A^2\nu}-\frac{27M^4}{4\nu^3}\|\nabla (u - u_0)\|^4-\frac{27}{4}\nu\alpha^4>0
\]
holding.  Suppose further that $\mu\ge \frac{\nu}{8C_I^2H^2}$.  Then CDA-Newton (\ref{CDAN}) converges to $u$ quadratically: 
	\begin{align}\label{css}
		\| \nabla (u - u_{k+1})\|\leq \sqrt{\frac{8A}{\nu}}\|\nabla (u - u_{k})\|^2.
	\end{align}
\end{theorem}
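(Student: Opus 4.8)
The plan is to follow the architecture of the proof of Theorem \ref{sn}, but to eliminate the single place where that argument was forced to assume $8\alpha<1$: the treatment of the destabilizing term $b(e_{k+1},u_k,e_{k+1})$. In Theorem \ref{sn} this term was bounded using the stability estimate $\|\nabla u_k\|\le 2\nu^{-1}\|f\|_{-1}$, which subtracts a multiple of $\alpha\nu$ from the coercive coefficient of $\|\nabla e_{k+1}\|^2$ and therefore requires $\alpha$ small. To avoid this, I would set $e_k=u-u_k$, subtract \eqref{CDAN} from \eqref{wd}, and test with $v=e_{k+1}$, using $b(u_k,e_{k+1},e_{k+1})=0$, to reach the same error identity \eqref{PNewton}. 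The essential new step is to write $u_k=u-e_k$ in the remaining quadratic term and split
\[
-b(e_{k+1},u_k,e_{k+1})=-b(e_{k+1},u,e_{k+1})+b(e_{k+1},e_k,e_{k+1}).
\]

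Next I would bound all three trilinear terms with the \emph{sharp} estimate $b(u,w,v)\le M\|u\|^{1/2}\|\nabla u\|^{1/2}\|\nabla w\|\|\nabla v\|$ rather than the crude $H^1$ bound, so that Young's inequality splits each term into a small multiple of $\|\nabla e_{k+1}\|^2$ plus a (possibly large) multiple of $\|e_{k+1}\|^2$. For $b(e_{k+1},u,e_{k+1})$ I would invoke the a priori estimate \eqref{Pri}, which gives $M\|\nabla u\|\le\alpha\nu$ and yields the $64\nu\alpha^4\|e_{k+1}\|^2$ contribution; for $b(e_{k+1},e_k,e_{k+1})$ I would use the induction hypothesis $\|\nabla e_k\|\le\|\nabla(u-u_0)\|$, yielding the $\tfrac{64M^4}{\nu^3}\|\nabla(u-u_0)\|^4\|e_{k+1}\|^2$ contribution; and for $b(e_k,e_k,e_{k+1})$ I would use the free parameter $A$ to produce the quadratic driver $A\|\nabla e_k\|^4$ together with the $\tfrac{M^4}{16A^2\nu}\|e_{k+1}\|^2$ remainder, exactly as in \eqref{33}. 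The crucial point is that in each case the $\alpha$- and $u_0$-dependence is pushed entirely onto the $\|e_{k+1}\|^2$ term, leaving only $O(\nu)$ coefficients on $\|\nabla e_{k+1}\|^2$.

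I would then lower-bound the left-hand side $\nu\|\nabla e_{k+1}\|^2+\mu\|I_He_{k+1}\|^2$ exactly as in \eqref{lowerb} and \eqref{22}: using $\mu\ge\frac{\nu}{8C_I^2H^2}$ together with the interpolation estimate \eqref{interpolationi}, I peel off $\tfrac{\nu}{8}\|\nabla e_{k+1}\|^2$ and convert the nudging term into $\tfrac{\lambda}{2}\|e_{k+1}\|^2=\tfrac{\nu}{16C_I^2H^2}\|e_{k+1}\|^2$. Collecting all terms, the net coefficient of $\|e_{k+1}\|^2$ on the left is precisely
\[
\frac{\nu}{16C_I^2H^2}-\frac{M^4}{16A^2\nu}-\frac{64M^4}{\nu^3}\|\nabla(u-u_0)\|^4-64\nu\alpha^4,
\]
which the hypothesis assumes is positive, so that term may be dropped; retaining a coefficient of at least $\tfrac{\nu}{8}$ on $\|\nabla e_{k+1}\|^2$ then gives $\tfrac{\nu}{8}\|\nabla e_{k+1}\|^2\le A\|\nabla e_k\|^4$, i.e. the claimed estimate \eqref{css} with constant $\sqrt{8A/\nu}$. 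Finally I would close the induction: \eqref{css} gives $\|\nabla e_{k+1}\|\le\sqrt{8A/\nu}\,\|\nabla(u-u_0)\|\,\|\nabla e_k\|$, and condition \eqref{3di} guarantees $\sqrt{8A/\nu}\,\|\nabla(u-u_0)\|<1$, so the errors contract and the hypothesis $\|\nabla e_k\|\le\|\nabla(u-u_0)\|$ is preserved.

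I expect the main obstacle to be the bookkeeping in the second step. One must tune the Young parameters so that the $\|e_{k+1}\|^2$ coefficients match the stated constants while simultaneously keeping the total $\|\nabla e_{k+1}\|^2$ coefficient produced on the right strictly below $\nu$, even after reserving the $\tfrac{\nu}{8}$ needed for the interpolation peel; it is precisely this competition between the coercive $\|\nabla e_{k+1}\|^2$ term and the $\alpha$- and $u_0$-dependent $\|e_{k+1}\|^2$ terms, absorbed by the large nudging coefficient when $H$ is small, that replaces the smallness of $\alpha$ and makes the scheme converge at all Reynolds numbers once enough data is supplied.
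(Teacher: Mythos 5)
Your proposal is correct and follows essentially the same route as the paper's proof: the same decomposition $-b(e_{k+1},u_k,e_{k+1})=-b(e_{k+1},u,e_{k+1})+b(e_{k+1},e_k,e_{k+1})$, the same sharp trilinear bounds and Young splittings producing the exact coefficients $\frac{M^4}{16A^2\nu}$, $\frac{64M^4}{\nu^3}\|\nabla e_0\|^4$, and $64\nu\alpha^4$ on $\|e_{k+1}\|^2$, the same lower bound on the nudging term yielding $\frac{\nu}{16C_I^2H^2}\|e_{k+1}\|^2+\frac{\nu}{8}\|\nabla e_{k+1}\|^2$, and the same induction closed by \eqref{3di}. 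No gaps.
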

\vspace{1mm}
\begin{proof}
Set $e_k=u-u_k$.	Subtracting equation (\ref{CDAN}) from (\ref{wd}) (same as  (\ref{PNewton})) gives 
\begin{equation}\label{SNewton}
	\begin{split}
		0&=a(e_{k+1},v)+\mu (I_He_{k+1},I_Hv)+b(e_k,e_k,v)+b(e_{k+1},u_k,v).
	\end{split}
\end{equation}
Note we do not assume upper bound on $u_k$ here, and this varies the proof from the previous theorem's proof. By adding and subtracting term $b(e_{k+1},u,e_{k+1})$ in (\ref{SNewton}) we have
	\begin{equation}\label{3dold}
		\begin{split}
		0&=a(e_{k+1},v)+\mu (I_He_{k+1},I_Hv)+b(e_k,e_k,v)-b(e_{k+1},e_k,v)+b(e_{k+1},u,v).
		\end{split}
	\end{equation}
Setting $v=e_{k+1}$	on (\ref{3dold}), rearranging terms, and  applying a standard bound on the trilinear term $b$, Young's inequality, and inequality (\ref{Pri}), we obtain 
{\color{red}	{	\begin{equation}\label{2c}
		\begin{split}
			&	\nu\|\nabla e_{k+1}\|^2+\mu\|I_He_{k+1}\|^2
			\leq M\|\nabla e_{k}\|^2\|\nabla e_{k+1}\|^{\frac{1}{2}}\| e_{k+1}\|^{\frac{1}{2}}\\&~~~~+M\|\nabla e_k\|\|\nabla e_{k+1}\|^{\frac{3}{2}}\| e_{k+1}\|^{\frac{1}{2}}+M\|\nabla u\|\|\nabla e_{k+1}\|^{\frac{3}{2}}\| e_{k+1}\|^{\frac{1}{2}}\\
			&\leq A\|\nabla e_{k}\|^4+\frac{M^2}{4A}\|\nabla e_{k+1}\|\| e_{k+1}\|+\frac{\nu}{4}\|\nabla e_{k+1}\|^2+\frac{27}{4\nu^3}M^4\|\nabla e_k\|^4\|e_{k+1}\|^2\\&~~~~+\frac{\nu}{4}\|\nabla e_{k+1}\|^2+\frac{27}{4\nu^3}M^4\|\nabla u\|^4\|e_{k+1}\|^2\\
			&\leq\frac{3\nu}{4}\|\nabla e_{k+1}\|^2+A\|\nabla e_{k}\|^4+\frac{M^4}{16A^2\nu}\| e_{k+1}\|^2+\frac{27M^4}{4\nu^3}\|\nabla e_k\|^4\|e_{k+1}\|^2+\frac{27}{4}\nu\alpha^4\|e_{k+1}\|^2,
		\end{split}
	\end{equation}}}
where $A>0$.
Analogous to what is done for CDA-Picard in section 3 with sufficiently large $\mu$, we lower bound $\nu\|\nabla e_{k+1}\|^2+\mu\|I_He_{k+1}\|^2$ as:
	\begin{align}\label{sn1}
		\nu\|\nabla e_{k+1}\|^2+\mu\|I_He_{k+1}\|^2\geq \frac{	\nu}{16C_I^2H^2}\|e_{k+1}\|^2+\frac{7\nu}{8}\|\nabla e_{k+1}\|^2.
	\end{align}
	Combining  (\ref{2c}) and (\ref{sn1}) gives us 
	\begin{equation}\label{11}
		\begin{split}
			&\left(\frac{\nu}{16C_I^2H^2}-\frac{M^4}{16A^2\nu}-\frac{27M^4}{4\nu^3}\|\nabla e_k\|^4-\frac{27}{4}\nu\alpha^4\right)\|e_{k+1}\|^2+\frac{	\nu}{8}\|\nabla e_{k+1}\|^2\leq A\|\nabla e_{k}\|^4.
		\end{split}
	\end{equation}
	Next, choose $A$ and $H$ such that the following inequalities are satisfied (it is assumed $H$ is sufficiently small so this is possible):
	\begin{align}
		&\label{10}	\sqrt{\frac{8A}{\nu}}\|\nabla e_0\|<1 ~~\text{and~ ~ }\frac{\nu}{16C_I^2H^2}-\frac{M^4}{16A^2\nu}-\frac{27M^4}{4\nu^3}\|\nabla e_0\|^4-\frac{27}{4}\nu\alpha^4\geq 0.
	\end{align}
	By induction, once both inequalities in (\ref{10}) hold, we have, for any $k$ that
	\begin{align}
		\frac{\nu}{16C_I^2H^2}-\frac{M^4}{16A^2\nu}-\frac{27M^4}{4\nu^3}\|\nabla e_0\|^4-\frac{27}{4}\nu\alpha^4\geq 0,
	\end{align}
	and thus
	\begin{align}\label{3dl}
		\|\nabla e_{k+1}\|\leq \sqrt{\frac{8A}{\nu}}\|\nabla e_k\|^2.  
	\end{align}
	This completes the proof.
\end{proof}

\subsection{CDA-Newton numerical tests}\label{TCDA_N}

We now test the CDA-Newton method on the same 2D driven cavity test problem from section 3.  Here we use Re=1000, 3000 and 5000, and $\frac{1}{64}$ uniform triangulation with $(P_2,P_1)$ Taylor-Hood elements.  In our tests, Newton (without CDA) works well up until Re=700, after which it fails (tests omitted).  We do not include any line search in our implementation; we implement the algorithm as stated in section 2.

Figure \ref{NC2} shows convergence results for the cases of Re=1000, 3000 and 5000, with varying $H$.  In each case, we observe that usual Newton fails, and with enough data, CDA-Newton converges.  For {\color{red} Re=1000 and 3000, $H\le \frac18$ (49 measurement locations) is needed and $H\le\frac{1}{16}$ (225 measurement locations) is needed for 5000}.  This is consistent with our analysis that as $\nu$ decreases, $H$ must also decrease to achieve convergence.  We also observe that when CDA-Newton does converge, it converges quadratically as expected.

\begin{figure}[H]
\includegraphics[height=4cm,viewport=0 0 400 290, clip]{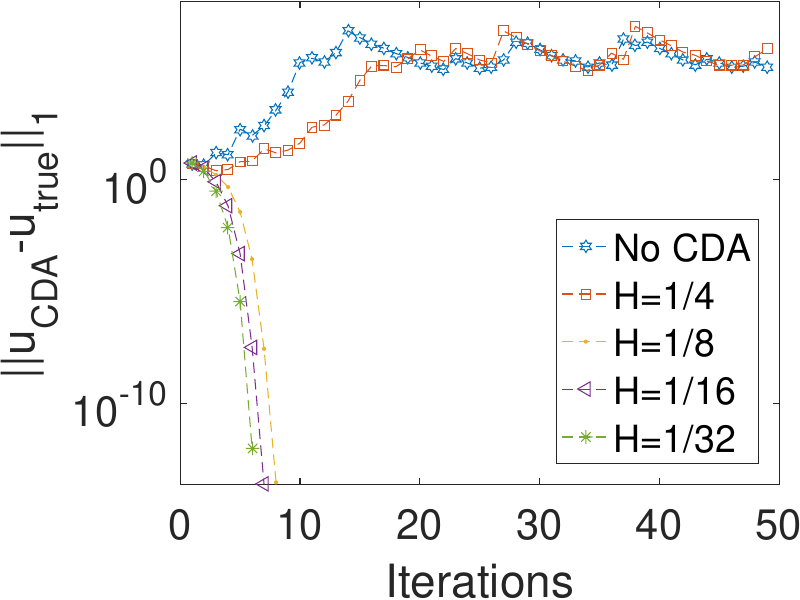}
\includegraphics[height=4cm,viewport=0 0 400 290, clip]{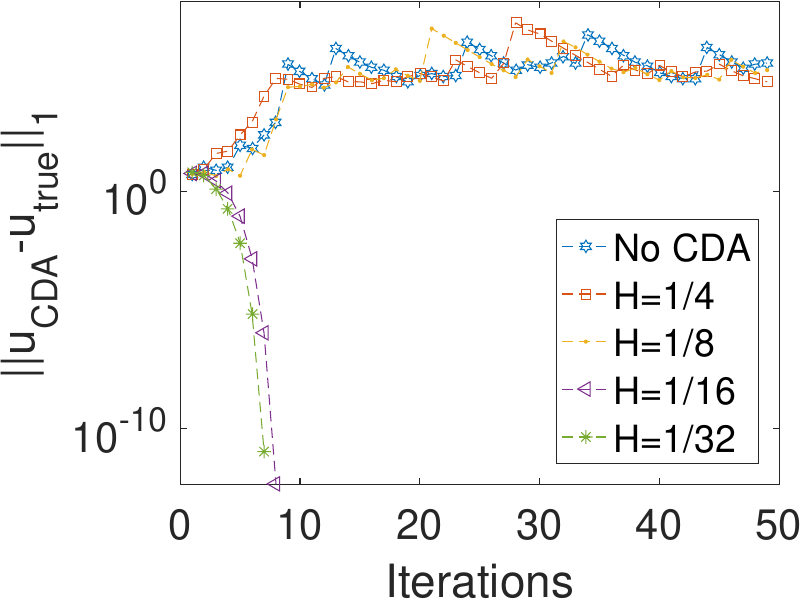}
\includegraphics[height=4cm,viewport=0 0 400 290, clip]{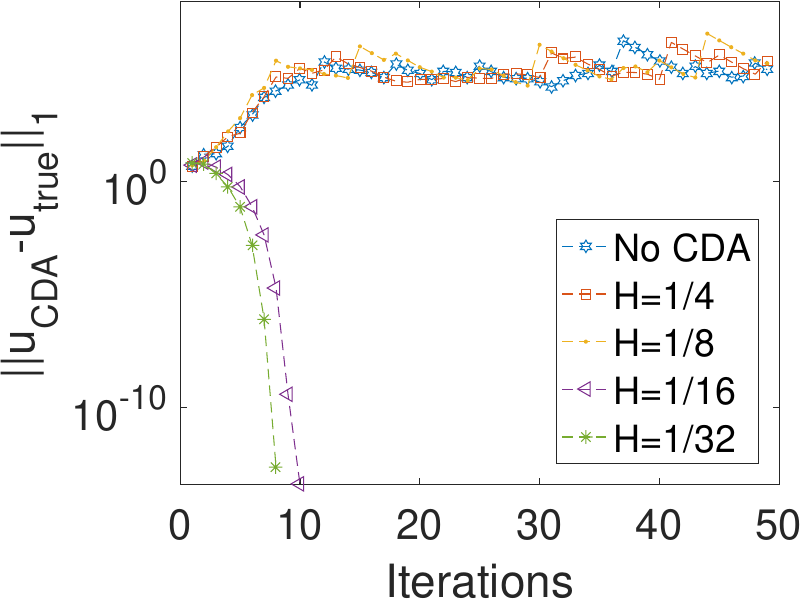}
\caption{Shown above are CDA-Newton convergence plots at varying Re=1000 (left), 3000 (center) and 5000 (right).}\label{NC2}
\end{figure}

\section{Conclusions}\label{conclusion}
We proposed, analyzed and tested CDA-Picard and CDA-Newton methods to incorporate measurement data into nonlinear solvers.  We found that the use of {\color{red} CDA provided accelerated convergence and even enabled convergence in some cases.  In no tests did CDA negatively affect convergence.  For CDA-Picard, we proved that the linear convergence rate was improved by a scaling factor of (at least) $H^{1/2}$, and that convergence can be achieved for larger $\alpha$ when CDA is used.  In practice, this means that if usual Picard already converges, using CDA with any amount of data will accelerate convergence, and the more data is included the faster the convergence will be.  If usual Picard does not converge, then the analysis proves that if enough measurement data is obtained so that $H\alpha^2 \le O(1)$, then CDA-Picard will converge.  However, the numerical tests suggest this sufficient condition on $H$ is a very pessimistic bound.  Additionally, we note that combingation of Anderson acceleration together with CDA-Picard provided the best results for improving Picard.} 

For CDA-Newton, we proved that the condition of usual Newton that the initial iterate be `close enough' can be significantly weakened, as reducing $H$ increases the size of the domain where initial guesses can be and the iteration will still converge.  We also proved for CDA-Newton that with enough data measurements, the smallness assumption can be removed.  Multiple numerical tests illustrated these results, {\color{red} and again the analysis bounds for $H$ and $\alpha$ were much more pessimistic than what is observed in numerical tests.}

Additionally, motivated by our analysis, the implementation of CDA was {\color{red} able to (implicitly) use $\mu=\infty$ by doing a direct implementation of the measurements into the linear systems, just as a Dirichlet boundary condition would be implemented.}  We observed no adverse effects from this implementation.

For future work, we will apply our analysis techniques of this paper, together with $\mu=\infty$, to CDA applied to fully discretized time dependent PDEs and other steady PDEs.  This work is underway, and already showing very promising results.  A second future direction is exploring the relationship between the CDA-Picard results and nonsingular steady NSE solutions.  That is, we have proven that if $\sqrt{ {\color{red}\sqrt{2}}C_I} H^{1/2}\alpha<1$, then CDA-Picard is globally convergent to the steady NSE solution that also agrees with the measurement data (which is known to be unique \cite{L23}).  Hence if $H$ is small enough, then there can be only 1 steady NSE solution that satisfies the finite number of measurement values, even when the steady NSE has multiple solutions.  {\color{red} Lastly, we plan to look deeper into a more precise condition on $H$ that will provide convergence of CDA-Picard when Picard fails.  As mentioned above, there is a significant gap between the sufficient conditions for convergence of CDA-Picard provided by the theory and what the numerical tests show is necessary.}

\section{Acknowledgements}
All authors were partially supported by NSF grant DMS 2152623.\\ \ \\
The authors thank Professors Julia Novo and Bosco Garc\'ia-Archilla for helpful discussions regarding this work.


\section{Appendix}

	\begin{proof}[Proof of Lemma \ref{PLemma}]
		We first show the unconditional stability. Choosing $v=u_{k+1}$ in (\ref{PicardO}) and then rearranging terms leads to 
		\begin{equation}\label{sbb3}
			\begin{split}
				\nu\|\nabla {u}_{k+1} \|^2&=a\left({u}_{k+1},{u}_{k+1}\right)=\left\langle{f},{u}_{k+1}\right\rangle-b\left({u}_{k},{u}_{k+1},{u}_{k+1}\right)
				\leq \|f\|_{-1}\|\nabla u_{k+1}\|,
			\end{split}
		\end{equation}
		which gives the bound $\|\nabla {u}_{k+1} \|\leq \nu^{-1}\|f\|_{-1}$ for any $k$.
		
		Next, we prove convergence. Subtracting (\ref{wd}) from (\ref{PicardO}) results in 
		\begin{equation}\label{sbb}
			\begin{split}
				0&=a(u,v)-a(u_{k+1},v)+b(u,u,v)-b(u_k,u_{k+1},v)\\
				&=a(u-u_{k+1},v)+b(u-u_k,u,v)+b(u_k,u-u_{k+1},v)\\
				&=a(e_{k+1},v)+b(e_k,u,v)+b(u_k,e_{k+1},v),
			\end{split}
		\end{equation}
		where $e_k:=u - u_k$.
		Taking $v=e_{k+1}$ in equation (\ref{sbb}) and applying a bound on the  trilinear term,  we have 
		\begin{equation}\label{upperr}
			\begin{split}
				&\nu\|\nabla e_{k+1}\|^2=a(e_{k+1},e_{k+1})=-b(e_k,u,e_{k+1})\\
				&\leq M\|\nabla e_k\|\|\nabla u\| \|\nabla e_{k+1}\|\leq M\nu^{-1}\|f\|_{-1}\|\nabla e_k\| \|\nabla e_{k+1}\|,
			\end{split}
		\end{equation}
		which results in the inequality 
		$
		\|\nabla e_{k+1}\|\leq M\nu^{-2}\|f\|_{-1}\|\nabla e_k\|=\alpha \|\nabla e_k\|.$
		This completes the proof. 
	\end{proof}

	We now prove Lemma \ref{nc}.  This proof consists of two parts: stability and convergence, and we begin with stability.
	
	\begin{lemma}\label{Nstable}
		If $8\alpha\leq 1$, the Newton method (\ref{NewtonO}) is stable, i.e., 
		\begin{align}\label{snewton}
			\|\nabla u_k\|\leq \frac{2}{\nu}\|f\|_{-1}.
		\end{align}
	\end{lemma}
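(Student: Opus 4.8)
The plan is to prove the bound by induction on $k$, with inductive hypothesis $\|\nabla u_k\|\le 2\nu^{-1}\|f\|_{-1}$; the base case holds for any reasonable initial guess, in particular $u_0=0$, which is the choice used in the numerical tests. The whole argument is a single energy estimate obtained by testing the Newton iteration \eqref{NewtonO} against $v=u_{k+1}$, so the first step is to make that substitution and simplify using the structure of the trilinear form.

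Testing \eqref{NewtonO} with $v=u_{k+1}$ gives $\nu\|\nabla u_{k+1}\|^2 + b(u_k,u_{k+1},u_{k+1}) + b(u_{k+1},u_k,u_{k+1}) = \langle f,u_{k+1}\rangle + b(u_k,u_k,u_{k+1})$. The key simplification is that the skew-symmetry property $b(w,v,v)=0$ annihilates the term $b(u_k,u_{k+1},u_{k+1})$, leaving $\nu\|\nabla u_{k+1}\|^2 = \langle f,u_{k+1}\rangle + b(u_k,u_k,u_{k+1}) - b(u_{k+1},u_k,u_{k+1})$. I would then bound the right-hand side: the duality pairing by $\|f\|_{-1}\|\nabla u_{k+1}\|$, and the two trilinear terms via $b(u,w,v)\le M\|\nabla u\|\|\nabla w\|\|\nabla v\|$, yielding $M\|\nabla u_k\|^2\|\nabla u_{k+1}\|$ and $M\|\nabla u_k\|\|\nabla u_{k+1}\|^2$ respectively. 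Dividing through by $\|\nabla u_{k+1}\|$ (the estimate being trivial if it vanishes) produces the recursion $(\nu - M\|\nabla u_k\|)\|\nabla u_{k+1}\| \le \|f\|_{-1} + M\|\nabla u_k\|^2$.

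With the recursion in hand, the induction closes by inserting the hypothesis $\|\nabla u_k\|\le 2\nu^{-1}\|f\|_{-1}$ and rewriting everything in terms of $\alpha = M\nu^{-2}\|f\|_{-1}$: one finds $M\|\nabla u_k\|\le 2\nu\alpha$ and $M\|\nabla u_k\|^2\le 4\alpha\|f\|_{-1}$. The assumption $8\alpha\le 1$ then keeps the coefficient $\nu - M\|\nabla u_k\| \ge \tfrac{3\nu}{4}$ positive and bounds the right-hand side by $\tfrac32\|f\|_{-1}$, which delivers exactly $\|\nabla u_{k+1}\|\le 2\nu^{-1}\|f\|_{-1}$ and reproduces the inductive hypothesis.

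The main obstacle, and the reason the smallness condition $8\alpha\le 1$ is needed, is the term $b(u_{k+1},u_k,u_{k+1})$, which carries $u_{k+1}$ in two arguments and so cannot be eliminated by skew-symmetry the way $b(u_k,u_{k+1},u_{k+1})$ is. It contributes $M\|\nabla u_k\|\|\nabla u_{k+1}\|^2$, a quantity of the same homogeneity as the leading viscous term $\nu\|\nabla u_{k+1}\|^2$; it cannot be handled by Young's inequality and must instead be dominated directly by the viscous term. This is possible only because the inductive bound on $\|\nabla u_k\|$ together with $8\alpha\le 1$ forces $M\|\nabla u_k\|$ strictly below $\nu$, leaving a positive leftover coefficient. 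Tracking the constants carefully so that the induction closes at precisely the same bound $2\nu^{-1}\|f\|_{-1}$ is the delicate part.
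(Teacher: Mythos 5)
Your proof is correct and follows essentially the same route as the paper: test \eqref{NewtonO} with $v=u_{k+1}$, use $b(u_k,u_{k+1},u_{k+1})=0$ to reach the recursion $(\nu - M\|\nabla u_k\|)\|\nabla u_{k+1}\|\le M\|\nabla u_k\|^2+\|f\|_{-1}$, and close an induction on the bound $2\nu^{-1}\|f\|_{-1}$; the only difference is that the paper works with generic constants $N_1,N_2$ and optimizes to discover that $8$ is the sharpest admissible constant, whereas you directly verify that the pair $(8,2)$ closes the loop. Your constant-tracking checks out ($\tfrac{3\nu}{4}\|\nabla u_{k+1}\|\le\tfrac{3}{2}\|f\|_{-1}$ gives exactly the inductive bound), so this is a correct, slightly more streamlined write-up of the same argument.
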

	
	\begin{remark}
		The stability condition $8\alpha\leq 1$ is sufficient, not necessary. In many applications, the stability condition is far less restrictive.
	\end{remark}

	\begin{proof}
		Choosing $v=u_{k+1}$ in equation (\ref{NewtonO}) gives us 
		\begin{equation}\label{NN11}
			\begin{split}
				&\nu\|\nabla u_{k+1}\|^2=a\left({u}_{k+1},u_{k+1}\right)\\
				&=-b\left({u}_{k},{u}_{k+1},u_{k+1}\right)-b\left({u}_{k+1},{u}_{k},u_{k+1}\right)+\left\langle{f},u_{k+1}\right\rangle+b\left({u}_{k},{u}_{u},u_{k+1}\right)\\
				&=-b\left({u}_{k+1},{u}_{k},u_{k+1}\right)+\left\langle{f},u_{k+1}\right\rangle+b\left({u}_{k},{u}_{k},u_{k+1}\right)\\
				&\leq M\|\nabla u_k\| \|\nabla u_{k+1}\|^2+M\|\nabla u_{k+1}\|\|\nabla u_{k}\|^2+\|f\|_{-1}\|\nabla u_{k+1}\|.
			\end{split}
		\end{equation}
		Rearranging (\ref{NN11}) leads to 
		\begin{equation}\label{NN22}
			\begin{split}
				&	(\nu-M\|\nabla u_k\|)\|\nabla u_{k+1}\|\leq M\|\nabla u_{k}\|^2+\|f\|_{-1}.
			\end{split}
		\end{equation}
	{\color{red}	{	Suppose $\frac{MN_1}{\nu^2}\|f\|_{-1}\leq 1$ and $\|\nabla u_k\|\leq\frac{N_2}{\nu}\|f\|_{-1} $ and  $N_1>N_2> 1$,  we wish to find such constants $N_1$ and $N_2$ that satisfy inequality (\ref{NN22}).}} We deduce based on  (\ref{NN22}):
		\begin{equation}\label{NN33}
			\begin{split}
				\nu\left(1-\frac{N_2}{N_1}\right)\|\nabla u_{k+1}\|&\leq\nu\left(1-\frac{MN_2}{\nu^2}\|f\|_{-1}\right)\|\nabla u_{k+1}\|	=\left(\nu-\frac{MN_2}{\nu}\|f\|_{-1}\|\right)\|\nabla u_{k+1}\|\\
				&\leq\left(\nu-M\|\nabla u_k\|\right)\|\nabla u_{k+1}\|\leq M\|\nabla u_{k}\|^2+\|f\|_{-1}\\
				&\leq \frac{MN_2^2}{\nu^2}\|f\|_{-1}^2+\|f\|_{-1}\leq \frac{N_2^2}{N_1}\|f\|_{-1}+\|f\|_{-1},\\
			\end{split}
		\end{equation}
		which is equivalent to 
		\begin{equation}\label{NN44}
			\begin{split}
				\|\nabla u_{k+1}\|
				&\leq \frac{N_1}{N_1-N_2}\left(\frac{N_2^2}{N_1}+1\right)\frac{\|f\|_{-1}}{\nu}.\\
			\end{split}
		\end{equation}
		In order to keep an uniform bound of $\{u_k\}$ for $k=0,1,2,3,....$, we need to impose 
		\begin{equation}\label{NN55}
			\begin{split}
				\frac{N_1}{N_1-N_2}\left(\frac{N_2^2}{N_1}+1\right)\leq N_2,
			\end{split}
		\end{equation}
		which is the same as 
		\begin{equation}\label{NN66}
			\begin{split}
				N_1\geq \frac{2N_2^2}{N_2-1}.
			\end{split}
		\end{equation}
		The constant $N_1$ achieves a minimum $8$ when $N_2=2$ under the constraint $N1>N2\geq 1$, which completes the proof. 
	\end{proof}
	\vspace{2mm}
	{\color{red}	{\begin{remark}
		The choice of $N_1$ and $N_2$ in the above is not unique, and we only used the minimum $N_1$ choice to put the least restriction on $\alpha$.
	\end{remark}}}

%

We can now prove Lemma \ref{nc}.
	
	\begin{proof}[Proof of Lemma \ref{nc}]
		Subtracting equation (\ref{NewtonO}) from (\ref{wd}) we have
		\begin{equation}\label{N2}
			\begin{split}
				0&=a(u,v)-a(u_{k+1},v)+b(u,u,v)-b(u_k,u_{k+1},v)-b(u_{k+1},u_{k},v)+b(u_k,u_{k},v)\\
				&=a(u-u_{k+1},v)+b(u-u_k,u-u_k,v)+b(u_k,u-u_{k+1},v)+b(u-u_{k+1},u_k,v)\\
				&=a(e_{k+1},v)+b(e_k,e_k,v)+b(u_k,e_{k+1},v)+b(e_{k+1},u_k,v)
			\end{split}
		\end{equation}
		Letting $v=e_{k+1}$ in (\ref{N2}), and standard bounds on this trilinear term, (\ref{snewton}), and Young's inequality, we obtain
		\begin{equation}
			\begin{split}
				\nu\|\nabla e_{k+1}\|^2&=b(e_k,e_k,e_{k+1})+b(u_k,e_{k+1},e_{k+1})+b(e_{k+1},u_k,e_{k+1})\\
				&=-b(e_k,e_k,e_{k+1})-b(e_{k+1},u_k,e_{k+1})\\
				&\leq M\|\nabla e_{k}\|^2\|\nabla e_{k+1}\|+M\|\nabla u_k\|\|\nabla e_{k+1}\|^2\\
				&\leq M\|\nabla e_{k}\|^2\|\nabla e_{k+1}\|+\frac{2M}{\nu}\|f\|_{-1}\|\nabla e_{k+1}\|^2,
			\end{split}
		\end{equation}
		which gives us 
		\begin{equation}
			\begin{split}
				\left(\nu-\frac{2M}{\nu}\|f\|_{-1}\right)\|\nabla e_{k+1}\|\leq M\|\nabla e_{k}\|^2,
			\end{split}
		\end{equation}
		Since $8\alpha<1$, we have $\nu\left(1-\frac{2M}{\nu^2}\|f\|_{-1}\right)=\nu\left(1-2\alpha\right)>0$. Once $0\leq\frac{M}{\nu(1-2\alpha)}\|\nabla e_{0}\|<1$, then $\{e_k\},~ k=0,1,2,...$ is a contraction sequence and 
		\begin{align}
			\|\nabla e_{k+1}\|\leq \frac{M}{\nu(1-2\alpha)}\|\nabla e_{k}\|^2=\frac{M}{\beta}\|\nabla e_{k}\|^2.  
		\end{align}
		Thus, $e_k$ converges to $0$ as $k\to +\infty$ quadratically, which completes the proof.  
	\end{proof}

\end{document}